\documentclass[11pt]{article}
\usepackage[T1]{fontenc}
\usepackage{lmodern}
\usepackage[margin=1in]{geometry}
\usepackage{amsmath,amssymb,amsthm,microtype}
\usepackage{needspace}
\usepackage{xcolor}
\usepackage[colorlinks=true,linkcolor=blue!45!black,citecolor=blue!45!black,urlcolor=blue!45!black,pdfencoding=auto]{hyperref}
\hypersetup{pdftitle={A Szemer\'edi--Trotter Theorem in Arbitrary Fields},pdfauthor={Mark Lewko}}
\newtheorem{theorem}{Theorem}
\newtheorem{lemma}[theorem]{Lemma}
\newtheorem{proposition}[theorem]{Proposition}
\newtheorem{corollary}[theorem]{Corollary}
\theoremstyle{remark}
\newtheorem{remark}[theorem]{Remark}
\newcommand{\F}{\mathbb F}
\newcommand{\LL}{\mathcal L}
\newcommand{\PP}{\mathcal P}
\newcommand{\Pol}{\Pi}
\allowdisplaybreaks[2]
\title{A Szemer\'edi--Trotter Theorem in Arbitrary Fields}
\author{Mark Lewko}
\date{}

\begin{document}
\maketitle

\begin{abstract}
Let $k$ be a field of characteristic $p\ge0$. We prove that $m$ points
and $n$ lines in $k^2$ determine $O((mn)^{2/3}+m+n+mn/p)$ incidences,
the last term being omitted in characteristic zero. The proof uses the
polynomial method, and for $m=n$ the bound is sharp over prime fields.
As applications, over prime fields in which $-1$ is not a square we
obtain the $L^2\to L^r$ extension estimate for the paraboloid in
$\F_p^3$ for $r>10/3$. Over every odd prime field, we show that a
two-source extractor construction of Bourgain has exponentially small
error at every min-entropy rate greater than $1/3$. We also improve
sum-product estimates for small sets in positive characteristic and
obtain projection and Furstenberg estimates over prime fields.
\end{abstract}

\section{Introduction}

Let $k$ be a field, let $P\subset k^2$ be a finite set of points, and let
$\LL$ be a finite set of distinct lines in $k^2$, with coordinates $x,y$.
Write $m:=|P|$, $n:=|\LL|$, and
\[
 I(P,\LL):=|\{(q,\ell)\in P\times\LL:q\in\ell\}|.
\]
If $k$ has positive characteristic we denote it by $p$ and write $\F_p$
for its prime subfield. In characteristic zero we set $p=\infty$ and
interpret $1/p=0$. We write $A\lesssim B$, equivalently $B\gtrsim A$ or
$A=O(B)$, if $A\le CB$ for an absolute constant $C>0$. A subscript,
as in $A\lesssim_\varepsilon B$, indicates that $C$ may depend on the
subscripted parameter. We write $t_+:=\max\{t,0\}$ and use natural
logarithms unless a base is specified. For polynomials, $\deg F$ denotes
total degree, with subscripts specifying the variables counted. We write
$F_x:=\partial F/\partial x$ for the formal partial derivative, and
similarly for other variables.

Over the real numbers, the Szemer\'edi--Trotter theorem \cite{ST} states
that
\begin{equation}\label{eq:ST}
 I(P,\LL)\lesssim (mn)^{2/3}+m+n.
\end{equation}
T\'oth \cite{Toth} extended \eqref{eq:ST} to $\mathbb C$ (see also Zahl
\cite{Zahl}). Over any field of characteristic zero, the coordinates of
a finite configuration lie in a finitely generated subfield. Such a
field embeds in $\mathbb C$, so the estimate holds over every field of
characteristic zero.

In positive characteristic an additional term is necessary. The
configuration $P=\F_p^2$, $\LL=\{y=ax+b:a,b\in\F_p\}$ has $m=n=p^2$ and
$I(P,\LL)=p^3$. This configuration exists over every field of characteristic $p$.
Our main result is the following.

\begin{theorem}\label{thm:main}
For every field $k$ and every finite configuration of points and lines
in $k^2$,
\begin{equation}\label{eq:main}
 I(P,\LL)\lesssim (mn)^{2/3}+m+n+\frac{mn}{p}.
\end{equation}
\end{theorem}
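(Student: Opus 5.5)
We reduce to the balanced regime and handle it with a polynomial-method incidence bound that carries its error term in characteristic $p$. First the trivial reductions. The Cauchy--Schwarz bounds $I\lesssim m\sqrt n+n$ and $I\lesssim n\sqrt m+m$ settle the cases $m\ge n^2$ and $n\ge m^2$. We may also replace $k$ by its algebraic closure, since incidences are unchanged. By point--line duality (after a generic projective change of coordinates, so that no line is vertical), we may assume $n\le m$. The claim then becomes: if $\sqrt m\le n\le m$ and $m\le p^2$-ish, then $I\lesssim (mn)^{2/3}+m$. Indeed, when $(mn)^{1/3}\gtrsim p$ the term $mn/p$ dominates $(mn)^{2/3}$. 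So the real content is to prove
\[
 I\lesssim (mn)^{2/3}+m+n \qquad\text{whenever } (mn)^{1/3}\le c\,p ,
\]
together with a separate argument that $I\lesssim mn/p+m+n$ in the complementary range.

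The complementary range is handled by subsampling. Suppose $(mn)^{1/3}> cp$. Choose a random subset of the lines with density $\theta$ so that $(m\cdot\theta n)^{1/3}\approx cp$. Apply the small-range bound to this subset and take expectations; this gives $\theta I\lesssim (m\theta n)^{2/3}+m+\theta n$. Dividing by $\theta$ yields $I\lesssim p^2/\theta+m/\theta+n$. Since $\theta\approx c^3p^3/(mn)$, this becomes $I\lesssim mn/p+ mn\cdot m/p^3+n$. The middle term is $\le mn/p$ precisely when $m\lesssim p^2$. If instead $m\gg p^2$, subsample the points as well (or the points alone), using the symmetric form of the inequality; a two-parameter version of the same averaging closes every case. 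So everything rests on the range $(mn)^{1/3}\le cp$.

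For that main range I would run the Guth--Katz-style argument that avoids cell decomposition, since there are no ordered cells over $k$. The plan has three steps.

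\emph{Step 1.} Prune to the case where every line contains about $r:=I/n$ points and every point lies on about $s:=I/m$ lines, by standard dyadic pigeonholing. Assume $I\ge C((mn)^{2/3}+m+n)$; then $r,s$ are large.

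\emph{Step 2.} Take a nonzero polynomial $F$ of degree $D\approx r/2$ vanishing on a random subset of the points of size about $D^2$. Each line containing more than $D$ chosen points lies in $Z(F)$. By Chernoff, most lines do, so $Z(F)$ contains $\gtrsim n$ lines. But a curve of degree $D$ contains at most $D$ lines. This gives $n\lesssim D\approx r=I/n$ after choosing the subsample density appropriately. Doing the bookkeeping with density $\rho$ (sample $\rho m$ points, $D\approx\sqrt{\rho m}$, require $\rho r\gtrsim D$) recovers exactly $I\lesssim (mn)^{2/3}$.

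\emph{Step 3, the main obstacle.} The difficulty is that this bound alone does not use characteristic. The place where $p$ enters is the interpolation count when we want $F$ to be \emph{reduced}, with no multiple components, so that the lines in $Z(F)$ are counted with multiplicity. In characteristic $p$, a degree-$D$ polynomial can be a $p$-th power composite, e.g.\ $G(x^p,y^p)$ with Frobenius-type degeneracies. I would first try to ensure $D<p$, which is exactly our hypothesis $D\approx (mn)^{1/3}\le cp$. With $D<p$, the partial derivatives $F_x,F_y$ cannot both vanish identically on a component. So the standard flecnode/Bézout arguments (intersecting $F$ with $F_x$ to bound rich points on lines in $Z(F)$, with at most $D^2$ intersection points) go through as in characteristic zero. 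I expect checking that Bézout's bound for $Z(F)\cap Z(F_x)$ and squarefreeness survive exactly under $D<p$ to be the crux.
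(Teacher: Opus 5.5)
\textbf{There is a genuine gap in the main range.} Your preliminary reductions are fine: the Cauchy--Schwarz cases work, and the two-parameter subsampling does reduce the theorem to the range $(mn)^{1/3}\le cp$. The problem is that Step~2 does not prove Szemer\'edi--Trotter there.

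Do the bookkeeping explicitly. Sampling $\rho m$ points forces $D\approx\sqrt{\rho m}$. A line with $r$ points is forced into $Z(F)$ only if $\rho r\gtrsim\sqrt{\rho m}$, that is, $\rho\gtrsim m/r^2$. Since $\rho\le1$, this needs $r\gtrsim\sqrt m$. In that case the conclusion $n\lesssim D\approx m/r$ is just $I\lesssim m$, which is the trivial bound coming from the fact that two lines share at most one point.

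In the relevant range $\sqrt m\le n\le m$, the critical richness $r=(mn)^{2/3}/n=m^{2/3}n^{-1/3}$ is at most $\sqrt m$. In the balanced case $r\approx N^{1/3}$, while interpolating through $N$ points costs degree about $N^{1/2}$. So no polynomial of admissible degree is forced to contain the lines. The B\'ezout and derivative considerations of Step~3 then have nothing to act on, since they only control the at most $D$ lines already lying in $Z(F)$. Over $\mathbb R$ this is exactly the gap that polynomial partitioning fills, and cells are unavailable here. Separately, Step~1's simultaneous regularization of points and lines without logarithmic loss is also unjustified, but that is secondary.

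\textbf{The missing idea} is to interpolate the lines rather than the points, using an extra slope variable $z$.

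\begin{itemize}
\item Take $S(x,y,z)$ of degree $d$ in $x,y$ and degree $e$ in $z$ with $S(x,ax+b,a)\equiv0$ for each given line $y=ax+b$. Each line imposes only $d+1$ linear conditions, so $e\approx 2n/d$ suffices.
\item At a point $q$ on $r(q)$ lines, the coefficient of $u^j$ in $S(q_1+u,q_2+zu,z)$ has $z$-degree at most $e+j$ but vanishes at the $r(q)$ distinct slopes. Hence the restriction of $S$ to the line of slope $z$ through $q$ vanishes to order at least $r(q)-e$.
\item Working over $k(z)$, these auxiliary lines are distinct and parallel. The multiplicity lemma then bounds $\sum(\mu_\lambda-1)_+\le d(d-1)$ for squarefree $S$ of degree $d<p$, via a dimension count using that $G$ and $G_X$ are coprime.
\item Points where the restriction is identically zero give distinct factors linear in $z$, so there are at most $e$ of them.
\item Altogether $I\lesssim m(e+2)+n+d^2\approx mn/d+d^2+m+n$. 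Choosing $d=\min\{\lfloor(mn)^{1/3}\rfloor,p-1\}$ gives the theorem directly, with no subsampling step.
\end{itemize}

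Your intuition that $p$ enters through derivatives and the requirement $d<p$ is right. What your proposal lacks is a mechanism that lets a polynomial of degree $(mn)^{1/3}$ capture the incidences.
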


Thus \eqref{eq:ST} holds whenever $mn\le p^3$, since then
$mn/p\le(mn)^{2/3}$, and whenever $\min(m,n)\le p$, since then
$mn/p\le\max(m,n)$. For $m=n=N$ the bound reads
$I(P,\LL)\lesssim N^{4/3}+N^2/p$, and the examples in
Section~\ref{sec:examples} show that this bound is sharp over prime fields for
every $1\le N\le p^2$. Over an extension field the last term depends on
the characteristic rather than on the field cardinality.

Bourgain, Katz, and Tao \cite{BKT} obtained the first nontrivial
incidence bounds over prime fields, via sum-product estimates. The previous
best incidence exponent for arbitrary point and line sets over fields of
characteristic $p$ is due to Stevens and de Zeeuw
\cite[Theorem~3]{SDZ}. In the balanced case their bound is
$O(N^{22/15})$ ($22/15\approx1.4667$) for $N\lesssim p^{15/11}$. For a finite field $\F_q$
of $q$ elements, Vinh \cite[Theorem~3]{Vinh} proved
$I(P,\LL)\le mn/q+\sqrt{qmn}$ by spectral methods.
The polynomial method has been applied to related problems by Dvir
\cite{Dvir}, in the proof of the finite-field Kakeya conjecture, and by
Guth and Katz \cite{GK}, in their work on distinct distances. Kaplan,
Matou\v{s}ek, and Sharir \cite[Section~3]{KMS} gave what is now the
standard polynomial proof of \eqref{eq:ST} over $\mathbb R$, based on
the polynomial partitioning theorem of Guth and Katz
\cite[Section~4]{GK}. An exposition is given in \cite{TaoPoly}.

Our proof uses the polynomial method together with an elementary
multiplicity estimate for a polynomial of two variables restricted to
parallel lines. Let $F$ be a nonzero polynomial of degree at most $d<p$
with no repeated irreducible factors. Such a polynomial is called
squarefree. Restrict $F$ to finitely many distinct parallel lines on
which it does not vanish identically. Parametrizing each line gives
a polynomial in one variable. Choose one point on each line
and consider the multiplicity of the restriction there, allowing
multiplicity zero when the polynomial is nonzero at the point.
Lemma~\ref{lem:roots} states that, after subtracting $1$ from each
positive multiplicity, their sum is at most $d(d-1)$.

To use the lemma we construct a polynomial in the plane variables $x,y$
and a third variable $z$, which stands for a slope, such that setting $z$
equal to a slope $a$ produces a polynomial vanishing on every given line
of slope $a$. The degree bound $d$ in $x,y$ is chosen later. Allowing
a suitable degree in $z$ gives enough coefficients to meet these
vanishing conditions without increasing $d$. At a point
lying on many given lines, these conditions, one for each slope, force
the polynomial to vanish to high order along an auxiliary line through
the point. Taking the auxiliary lines parallel, the multiplicity
estimate bounds the total excess multiplicity by $d(d-1)$, provided
the restrictions are nonzero. We count the points with identically
zero restrictions separately and combine the two estimates to bound
the number of incidences.
Section~\ref{sec:overview} proves the balanced case over $\mathbb C$,
assuming the multiplicity estimate.
Section~\ref{sec:roots} proves the multiplicity estimate and
Section~\ref{sec:incidences} proves Theorem~\ref{thm:main}.
Sections~\ref{sec:roots} and~\ref{sec:incidences} together give a
self-contained proof of Theorem~\ref{thm:main}.

Section~\ref{sec:applications} gives several applications.
Over prime fields $\F_p$ in which $-1$ is not a square, we obtain the
$L^2\to L^r$ extension estimate for
the paraboloid in $\F_p^3$ for all $r>10/3$. This improves the range
$r>176/51\approx3.4510$ of \cite[Theorem~1.1]{LewBilII}, which combined the bilinear
approach of \cite{LewBilI} with incidence estimates.

It was observed in \cite{LewRect} that a sharp Szemer\'edi--Trotter
estimate would give a min-entropy rate near $3/8=0.375$ for Bourgain's
extractor based on the paraboloid in $\F_p^3$, with $-1$ not a square
in $\F_p$.
We obtain a better rate using his construction \cite{BourgainExt}
based on the parabola in $\F_p^2$. Over every odd prime field, the
extractor defined using $xy+x^2y^2$ works for two independent sources
of any min-entropy rate greater than $1/3$, with error exponentially
small in the input length. The proof iterates Theorem~\ref{thm:main} to bound higher
additive moments of arbitrary subsets of the parabola.\footnote{The rate $4/9\approx0.4444$ claimed for a higher-dimensional variant in \cite[Theorem~3]{LewExtractor} relied on an incorrect energy estimate in \cite[Lemma~4]{RS}. This issue does not impact the present work.}

For every $\varepsilon>0$ and every finite nonempty
$A\subset k$ with $|A|\le p^{41/71}$,
\[
 \max\{|A+A|,|AA|\}\gtrsim_\varepsilon |A|^{52/41-\varepsilon},
 \qquad 52/41\approx1.2683,
\]
where $A+A:=\{a+b:a,b\in A\}$ and $AA:=\{ab:a,b\in A\}$. There is no
size restriction in characteristic zero. For small sets in odd
characteristic the previous best exponent was $5/4$, up to logarithmic
factors, due to Mohammadi and Stevens \cite[Theorem~2]{MS} under the
hypothesis $|A|\lesssim p^{1/2}$.
A second argument uses Shakan's decomposition \cite{Shakan}, as
presented by Bloom \cite{Bloom}, with Theorem~\ref{thm:main} replacing
Szemer\'edi--Trotter. This gives the larger exponent
$49/38-\varepsilon$ ($49/38\approx1.2895$) in the smaller range $|A|\le p^{1/3}$.
We conclude with projection and Furstenberg estimates over prime
fields.

After the first version of this paper was posted to the arXiv,
Changxing Miao and Rui Xie informed me that they had independently
obtained a similar incidence estimate, along with several of the
applications given here \cite{MX}.

\section*{Acknowledgments and AI usage}

I would like to thank Josh Zahl for helpful suggestions on the
presentation and for many discussions of related questions over many
years, Paige Bright for pointing out the applications
to projections and Furstenberg sets in Section~\ref{sec:projections},
and Changxing Miao and Rui Xie for informing me of their work and
for related correspondence. I would also like to thank Christoph Thiele
for the wonderful summer schools he organizes. At the 2011 school,
he assigned me the paper of Solymosi and Tao \cite{SolTao}, which first exposed me to the subject. 

ChatGPT~6 was used as a sounding board and for assistance proving
Lemma~\ref{lem:roots}. It and Claude Fable helped with the optimizations of the sum-product and extractor estimates and revise the expositions. Claude
Fable suggested the approach in Section~\ref{sec:decomposition} and
helped develop that section. The author takes responsibility for all
content.

\section{The balanced case over the complex numbers}\label{sec:overview}

We prove the balanced case over $\mathbb C$, using the multiplicity
estimate stated below and proved in Section~\ref{sec:roots}. Let $P$
be a set of $N$ points and $\LL$ a set of $N$ lines in $\mathbb C^2$,
with $N\ge1$. We will show that
\[
 I(P,\LL)\lesssim N^{4/3}.
\]
For an integer $d\ge1$, we will construct a nonzero squarefree
polynomial $F\in\mathbb C[x,y]$ of total degree at most $d$. We will
also choose an auxiliary line in $\mathbb C^2$ through each point of
$P$. These lines
will be parallel and need not belong to $\LL$. To make them distinct,
we choose their common slope to avoid the finitely many slopes of
nonvertical lines joining pairs of points of $P$. Such a choice is possible since
$\mathbb C$ is infinite.

The aim is to turn points on many given lines into roots of high
multiplicity on the auxiliary lines. Write $r(q)$ for the number of
nonvertical lines of $\LL$ through $q$. We will construct $F$ so that
its restriction to the auxiliary line through $q$ has multiplicity at
least $(r(q)-e)_+$ at $q$, unless the restriction is identically zero.
The nonnegative integer $e$ measures the loss in passing from incidences
to multiplicities, and the construction will give $e\lesssim N/d$.

For the nonzero restrictions, the multiplicity estimate below bounds
the sum of the root multiplicities after subtracting $1$ from each
positive multiplicity. This allows $e+1$ incidences at each point and
bounds the total beyond these by $d(d-1)$, giving
$O(N(e+1)+d^2)$ incidences. The identically zero restrictions require a
separate count. Each such auxiliary line gives a distinct linear factor
of $F$, so there are at most $d$ such lines and hence at most $d$ such
points. We will show that their nonvertical incidences contribute
$O(N+d^2)$. Finally, each point lies on at most one vertical line, so
the vertical incidences contribute at most $N$. Combining these bounds gives
$I(P,\LL)\lesssim N+N^2/d+d^2$. Taking $d$ of order $N^{2/3}$
then gives the desired bound.

The multiplicity estimate we need is the complex case of
Lemma~\ref{lem:roots}. Let $F\in\mathbb C[x,y]$
be a nonzero squarefree polynomial of degree at most $d$. On each of
finitely many distinct parallel lines where $F$ does not vanish
identically, choose one point and let $\mu_\lambda$ be the order of
vanishing of the restriction there, with $\mu_\lambda=0$ if the
polynomial is nonzero at that point. Then
\begin{equation}\label{eq:overview-multiplicity}
 \sum_\lambda(\mu_\lambda-1)_+\le d(d-1).
\end{equation}
Thus a root of multiplicity one contributes zero to the sum, and a
root of multiplicity $\mu\ge2$ contributes $\mu-1$. The subtraction
of $1$ is necessary, since
$F=x$ has a simple root on every horizontal line.

\begin{proof}[Proof of the balanced bound]
Discard vertical lines, which contribute at most $N$ incidences,
and let $\LL_0$ be the remaining set, with $n=|\LL_0|\le N$.
We may assume $n\ge1$ and write the line equations as $y=ax+b$.

\smallskip
\Needspace{6\baselineskip}
\noindent\emph{Constructing the polynomial family.}
Fix an integer $d\ge1$, to be chosen at the end. We first leave the
slope as a variable $z$ and construct a nonzero polynomial
$S\in\mathbb C[x,y,z]$. Later, choosing a value $z_0$ for the common
auxiliary slope will give $F(x,y)=S(x,y,z_0)$. We seek $S$ of the form
\[
 S(x,y,z)=\sum_{i=0}^e z^iS_i(x,y),\qquad \deg S_i\le d,
\]
such that
\begin{equation}\label{eq:warmup-family}
 S(x,ax+b,a)\equiv0\qquad\text{for every }y=ax+b\text{ in }\LL_0.
\end{equation}
Here each $S_i$ lies in $\mathbb C[x,y]$, $d$ bounds the total degree
in $x,y$, and $e$ bounds the degree in $z$. For each slope $a$,
\eqref{eq:warmup-family} requires $S(x,y,a)$ to vanish identically
on every given line of slope $a$. Lines of different slopes impose
conditions on different members of this family. Increasing $e$
gives more coefficients to meet these conditions, while the degree
in $x,y$ remains at most $d$.

Each $S_i$ has $\binom{d+2}2$ coefficients, one for each monomial
$x^\alpha y^\beta$ with $\alpha+\beta\le d$, so there are
$(e+1)\binom{d+2}2$ unknown coefficients in all. On a given line
of slope $a$, the restriction $S(x,ax+b,a)$ has degree at most $d$ in
$x$. Requiring each of its coefficients to be zero imposes at most
$d+1$ homogeneous linear equations on the coefficients of $S$.
The $n$ given lines therefore impose at most $n(d+1)$ such equations.
There is a nonzero solution when the number of unknown coefficients
exceeds this bound, which is equivalent to
$(e+1)(d+2)>2n$. We may take
\[
 e=\left\lfloor\frac{2n}{d+2}\right\rfloor\le\frac{2n}{d+2}.
\]
We will need squarefreeness to apply \eqref{eq:overview-multiplicity}.
Replace $S$ by the product of its distinct irreducible factors.
Neither degree increases. If a product restricts to the zero
polynomial on a given line, at least one factor does, and that factor
remains in the new product. Thus the identities
\eqref{eq:warmup-family} are preserved and $S$ is now squarefree.

For comparison, the product of the equations of the $n$ given lines
satisfies \eqref{eq:warmup-family} with $e=0$, with no slope variable
at all. Its degree in the plane variables is $n$, and
\eqref{eq:overview-multiplicity} would then bound the excess
multiplicity only by $O(n^2)$, which can be as large as the trivial
bound $N^2$ on the number of incidences. The additional variable allows
us to decrease the degree in $x,y$, with $e\lesssim N/d$. We will see
that $e$ appears as a loss in the multiplicity at each point.

\smallskip
\noindent\emph{From incident lines to multiple roots.}
Fix $q=(q_1,q_2)\in P$, and recall that $r(q)$ counts the given
nonvertical lines through $q$. We now show how these lines force high
root multiplicity along the auxiliary line. For each fixed value of
$z\in\mathbb C$, the line of slope $z$ through $q$ is parametrized by
$(q_1+u,q_2+zu)$, $u\in\mathbb C$, with $u=0$ at $q$. Leaving $z$
as a variable, substitute this parametrization into $S$ and collect
powers of $u$:
\[
 R_q(u,z):=S(q_1+u,q_2+zu,z)=\sum_{j=0}^d c_{q,j}(z)u^j.
\]
For a fixed value $z=a$, the polynomial $R_q(u,a)$ is the restriction
of $S(x,y,a)$ to the line of slope $a$ through $q$, with $u=0$ at $q$.
It vanishes at $u=0$ to order at least $\mu$ exactly when
$c_{q,0}(a)=\dots=c_{q,\mu-1}(a)=0$. Thus the coefficients $c_{q,j}$,
which are polynomials in the single variable $z$, determine the order
of vanishing along the line of slope $z$ through $q$.

Now let $a$ be the slope of a given line through $q$, with intercept
$b$. Then $q_2=aq_1+b$, so the parametrized line is the given line
itself, and $R_q(u,a)$ is the restriction in \eqref{eq:warmup-family}
with $x=q_1+u$. Hence $R_q(u,a)\equiv0$ and every $c_{q,j}$ vanishes at $a$.
Distinct lines of $\LL_0$ through $q$ have distinct slopes, so each
coefficient $c_{q,j}$ has at least $r(q)$ distinct roots.

On the other hand, $\deg c_{q,j}\le e+j$. Indeed, a monomial
$x^\alpha y^\beta z^\gamma$ of $S$, with $\gamma\le e$, becomes
$(q_1+u)^\alpha(q_2+zu)^\beta z^\gamma$, and a term of degree $j$ in
$u$ uses at most $j$ of the factors $zu$, so its degree in $z$ is at
most $\gamma+j\le e+j$. In particular, $c_{q,0}$ has degree at most
$e$. If $r(q)>e$, then $c_{q,0}$ has more roots than its degree allows and must be
the zero polynomial. If $r(q)>e+1$, the same reasoning also forces
$c_{q,1}=0$. Continuing in this way, every coefficient of $u^j$ with
$j<r(q)-e$ is zero. Equivalently,
\begin{equation}\label{eq:warmup-contact}
 u^{(r(q)-e)_+}\mid R_q(u,z).
\end{equation}
The divisibility \eqref{eq:warmup-contact} holds for every value of
$z$, including slopes that do not occur in $\LL_0$. We will choose
the same value $z_0$ for all points of $P$. The restriction of $S(x,y,z_0)$ to
the line of slope $z_0$ through $q$ then has multiplicity at least
$(r(q)-e)_+$ at $q$, unless the restriction vanishes identically. These lines of
slope $z_0$ are auxiliary lines and need not belong to $\LL_0$.
The remaining step is to choose $z_0$ so that the auxiliary lines are
distinct and $S(x,y,z_0)$ is nonzero and squarefree, as required by
\eqref{eq:overview-multiplicity}.

\smallskip
\noindent\emph{Counting the incidences.}
Choose a complex number $z_0$ such that
\[
 F(x,y):=S(x,y,z_0)
\]
is nonzero and squarefree, and the lines
\[
 \lambda_q:\ y-q_2=z_0(x-q_1),\qquad q\in P,
\]
are distinct. Only finitely many values need to be excluded, so such
a $z_0\in\mathbb C$ exists. Indeed, since $S\ne0$, at least one of its
coefficients as a polynomial in $x,y$ is a nonzero polynomial in $z$.
Avoiding the roots of this coefficient ensures $F\ne0$. For
distinctness, avoid the slopes of the nonvertical lines joining pairs of points of
$P$. Substituting $z=z_0$ also preserves squarefreeness for all but
finitely many values of $z_0$.\footnote{If $S$ depends only on $z$ the assertion is
immediate. Otherwise let $S'$ be $S$ with its irreducible factors
depending only on $z$ removed. After a generic linear change of the
variables $x,y$, which does not affect squarefreeness, every
irreducible factor of $S'$ has positive degree in $x$ and the
coefficient of the highest power of $x$ in $S'$ is a nonzero polynomial
$L(z)$. Then $S'$ is squarefree and primitive as a polynomial in $x$
over $\mathbb C[y,z]$, so its discriminant with respect to $x$ is a
nonzero element of $\mathbb C[y,z]$. Expand the discriminant in powers
of $y$ and choose a nonzero coefficient, which is a polynomial in $z$.
Exclude the roots of the removed factors, of $L$, and of the chosen
coefficient. Then $S'(x,y,z_0)$ has the same degree in $x$ as $S'$, so
its discriminant is the specialization of the discriminant and is a
nonzero polynomial in $y$. Hence $S'(x,y,z_0)$ has no repeated factor
of positive degree in $x$, and it has no factor depending only on $y$,
since its leading coefficient in $x$ is a nonzero constant. Since $F$
is a nonzero constant multiple of $S'(x,y,z_0)$, it is squarefree.}

Let $E$ be the set of points $q\in P$ for which $F$ vanishes
identically on $\lambda_q$, and put $h=|E|$. We first count the
incidences outside $E$, where the restriction
$F(q_1+u,q_2+z_0u)=R_q(u,z_0)$ is nonzero. By
\eqref{eq:warmup-contact} its multiplicity at $u=0$ is at least
$(r(q)-e)_+$, so applying \eqref{eq:overview-multiplicity} to $F$ and
the lines $\lambda_q$ with $q\in P\setminus E$ gives
\[
 \sum_{q\in P\setminus E}(r(q)-e-1)_+\le d(d-1).
\]
Since $r(q)\le e+1+(r(q)-e-1)_+$, allowing $e+1$ incidences at each
point gives at most $N(e+1)+d(d-1)$ incidences outside $E$.

If $q\in E$ then $F$ vanishes identically on $\lambda_q$. Dividing
$F$ as a polynomial in $y$ by $y-q_2-z_0(x-q_1)$ leaves a remainder
depending only on $x$. Substituting $y=q_2+z_0(x-q_1)$ shows that this
remainder is zero, so the equation of $\lambda_q$ divides $F$. Distinct auxiliary
lines give linear factors that are not proportional, so the product of
these $h$ factors divides $F$, and $h\le\deg F\le d$. To count the
incidences between $E$ and the given lines, allow one incidence on each
given line. If a given line contains $s\ge1$ points of $E$, its
remaining $s-1$ incidences correspond to the pairs formed by fixing
one of these points and pairing it with the others. Each pair of points
lies on at most one given line. Thus
\[
 I(E,\LL_0)\le n+\binom h2\le n+\binom d2.
\]
Combining these estimates and adding back the vertical incidences gives
\[
 I(P,\LL)\le N(e+2)+d(d-1)+n+\binom d2
 \lesssim N+\frac{N^2}{d}+d^2.
\]
The last two terms balance at $d$ of order $N^{2/3}$. Taking
$d=\lfloor N^{2/3}\rfloor$ proves the bound.
\end{proof}

Over a finite field, the excluded slopes may exhaust the field.
In Section~\ref{sec:incidences} we therefore keep $z$ as a variable,
so we do not need to choose a slope in the original field. For $m$
points and $n$ lines, the term $N^2/d$ becomes $mn/d$, which balances
$d^2$ at $d=(mn)^{1/3}$. In characteristic $p$, the multiplicity
estimate requires $d<p$. When $(mn)^{1/3}\ge p$, we take $d=p-1$,
producing the additional term $mn/p$.

\section{The multiplicity lemma}\label{sec:roots}

We now prove the multiplicity estimate used in
Section~\ref{sec:overview}, over an arbitrary field. A squarefree
polynomial is restricted to distinct parallel lines, with one point
chosen on each line. The lemma bounds the sum of the root
multiplicities beyond the first order by the square of the degree.
In positive characteristic, the degree must be less than the
characteristic. The parallel lines let us use the same directional
derivative at every point.

Subtracting $1$ from each positive multiplicity allows us to count
vanishing conditions shared by the polynomial and its derivative.
Over a field $K$, if $f(u)=u^\mu g(u)$ with $g\in K[u]$ and $\mu\ge1$, then
$f'(u)=\mu u^{\mu-1}g(u)+u^\mu g'(u)$, so $u^{\mu-1}\mid f'$.
After removing the factors
constant in the chosen direction, the polynomial and its derivative
have no common nonconstant factor. A dimension count then bounds the
number of independent vanishing conditions they can both satisfy by the
product of their degrees. We give the count explicitly so that no
intersection-multiplicity theory is needed.

\begin{lemma}\label{lem:roots}
Let $K$ be a field, with $p$ its characteristic if positive and
$p=\infty$ otherwise, and let $d$ be a nonnegative integer with $d<p$.
Let $F\in K[x,y]$ be nonzero and squarefree of total degree at most $d$.
Let $\Lambda$ be a finite set of distinct parallel lines in $K^2$ on
none of which $F$ vanishes identically, and for each $\lambda\in\Lambda$
choose a point $q_\lambda\in\lambda$. Let $\mu_\lambda\ge0$ be the
multiplicity of the restriction of $F$ to $\lambda$ at $q_\lambda$: if
$v\in K^2$ is a nonzero vector parallel to $\lambda$ and $u$ is an
indeterminate, $\mu_\lambda$ is the largest integer $\mu$ with
$u^\mu\mid F(q_\lambda+uv)$ in $K[u]$. (Replacing $v$ by another such
vector rescales $u$ by a nonzero element of $K$, so $\mu_\lambda$ is
well defined.) In particular, $\mu_\lambda=0$ when $F(q_\lambda)\ne0$.
Then
\begin{equation}\label{eq:roots}
 \sum_{\lambda\in\Lambda}(\mu_\lambda-1)_+\le d(d-1).
\end{equation}
\end{lemma}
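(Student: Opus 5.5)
We will reduce to a Bézout-type count for $F$ and a directional derivative. After a linear change of coordinates, which preserves squarefreeness and total degree, assume the lines of $\Lambda$ are horizontal, $\lambda:\ y=c_\lambda$ with distinct $c_\lambda$, and take $v=(1,0)$. Then $\mu_\lambda$ is the order of vanishing of $F(x,c_\lambda)$ at $x=x_\lambda$, where $q_\lambda=(x_\lambda,c_\lambda)$. Write $F=A(y)G(x,y)$, with $A$ collecting the irreducible factors depending only on $y$. Since $F$ is not identically zero on any $\lambda$, we have $A(c_\lambda)\ne0$. Hence $\mu_\lambda$ is also the multiplicity of $G(x,c_\lambda)$ at $x_\lambda$. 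Moreover $\deg G=:d'\le d$, so it suffices to prove the bound for $G$ with $d'$ in place of $d$.

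The key step is that $G$ and $G_x$ have no common factor. Every irreducible factor $g$ of $G$ has positive $x$-degree, at most $d<p$, so $g_x\ne0$: its leading $x$-coefficient is multiplied by an integer $k$ with $1\le k<p$. If $g\mid G_x$, then writing $G=gH$ with $g\nmid H$ (squarefreeness) gives $g\mid g_xH$. Since $\deg_x g_x<\deg_x g$ and $g_x\neq 0$, we get $g\nmid g_x$, which is a contradiction. This is exactly where the hypothesis $d<p$ enters. Next, the remark before the lemma applies: if $G(x,c_\lambda)$ has a root of order $\mu_\lambda\ge1$ at $x_\lambda$, then $G_x(x,c_\lambda)=\partial_x[G(x,c_\lambda)]$ vanishes there to order at least $\mu_\lambda-1$. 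So both $G$ and $G_x$ vanish at $q_\lambda$ in the sense that $(x-x_\lambda)^{\mu_\lambda-1}$ divides their restrictions to $\lambda$.

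It remains to bound $\sum(\mu_\lambda-1)_+$ by $\deg G\cdot\deg G_x\le d(d-1)$ without intersection theory. The plan is to use the resultant $\operatorname{Res}_x(G,G_x)\in K[y]$. Because $G,G_x$ are coprime and $G$ is primitive in $x$ (no $y$-only factors), this resultant is a nonzero polynomial, and standard degree bounds give $\deg_y\operatorname{Res}\le d'(d'-1)$. After a preliminary generic shear, if $K$ is large enough, we would also make $G$ monic in $x$ of degree $d'$. We then show that $(y-c_\lambda)^{\mu_\lambda-1}$ divides the resultant. Since the $c_\lambda$ are distinct, summing degrees gives the bound.

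The main obstacle is this local divisibility, $\operatorname{ord}_{y=c_\lambda}\operatorname{Res}\ge\mu_\lambda-1$. The vanishing occurs along the $x$-direction, not at a common point in the transverse sense. The first thing to try is to write $\operatorname{Res}=aG+bG_x$ with $a,b\in K[x,y]$, restrict to $y=c_\lambda$, and expand around $x=x_\lambda$. This only gives vanishing at $y=c_\lambda$ to first order, so instead we would set $x=x_\lambda+t$ and $y=c_\lambda+s$ and use the Sylvester matrix over $K[[s]]$. Factoring $G$ over $\overline{K((s))}$ via Puiseux/Hensel, one cluster of $\mu_\lambda$ roots tends to $x_\lambda$, and the resultant equals $\operatorname{lc}\cdot\prod G_x(\text{roots})$. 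Hensel's lemma in characteristic $p>\mu_\lambda$ should make this rigorous.

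If the field is too small for shears, we would instead avoid them. The $x$-direction is already fixed, and the resultant in $x$ works with leading coefficient $\ell(y)$ of $G$. Here we must handle lines where $\deg_x$ drops by adjoining roots at infinity, whose multiplicities only help. Alternatively, we would replace the resultant by the explicit dimension count the paper hints at: compare $\dim$ of $K[x,y]_{\le D}/(G,G_x)$ growth against the number of independent conditions $\sum(\mu_\lambda-1)$.
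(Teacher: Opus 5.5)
\textbf{Overall.} Your preliminary steps (making the lines horizontal, splitting off $A(y)$, and deducing $\gcd(G,G_x)=1$ from $d<p$) are exactly the paper's. From there the paper uses no resultant. It takes the space $W_M$ of polynomials of degree at most $M$ satisfying $u^{h_i}\mid H(\xi_i+u,b_i)$, and shows by Lagrange interpolation in $Y$ that these $\sum_ih_i$ conditions are independent. It then notes that $G\Pi_{M-d_0}+G_X\Pi_{M-d_1}\subseteq W_M$ has codimension exactly $d_0d_1$, by coprimality. Your closing sentence names this count but supplies none of it. Your main route through $\operatorname{Res}_x(G,G_x)\in K[y]$ is a genuinely different strategy and can be made to work. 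As written, however, its central step is only asserted, and one ingredient is wrong.

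\textbf{The shear.} It cannot do what you want. An affine change of variables keeping the lines horizontal multiplies the coefficient of $x^{d'}$ in $G$ by a nonzero scalar. So if that coefficient is $0$, no such change, over any extension of $K$, makes $G$ monic of degree $d'$ in $x$. Fortunately you do not need it. With $n=\deg_xG$, the $x^i$-coefficients of $G$ and $G_x$ have degrees at most $d'-i$ and $d'-1-i$, and the weight count in the Sylvester matrix gives $\deg_y\operatorname{Res}_{n,n-1}(G,G_x)\le d'(d'-1)-(d'-n)^2$.

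\textbf{The gap.} The real gap is the local bound $\operatorname{ord}_{y=c}\operatorname{Res}\ge\mu-1$, which is the whole content of the lemma. Your sketch has three problems:
\begin{itemize}
\item Knowing that $\mu$ roots cluster at $\xi$ only gives $v(G_x(\alpha))>0$ for each cluster root. These valuations can be fractional (for $x^\mu-s$ each equals $(\mu-1)/\mu$), so positivity alone yields only first-order vanishing, no better than the attempt you rejected.
\item When the leading coefficient of $G$ vanishes at $c$, some of the remaining roots have negative valuation, and your sketch does not control their factors.
\item Puiseux series are not algebraically closed in characteristic $p$.
\end{itemize}

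\textbf{A correct argument.} Put $s=y-c$. Hensel's lemma over $K[[s]]$ lifts $G(\xi+t,c)=t^\mu w(t)$, $w(0)\ne0$, to $G(\xi+t,c+s)=G_1G_2$, with $G_1$ monic of degree $\mu$ and $G_1\equiv t^\mu\pmod s$. Multiplicativity of the resultant, together with $\partial_t(G_1G_2)=G_1'G_2+G_1G_2'$ and $G_1$ monic, gives $\operatorname{Res}(G,G_x)=\pm\operatorname{Disc}(G_1)\operatorname{Res}(G_1,G_2)\operatorname{Res}(G_2,G_x)$ for the translated polynomials, with all three factors in $K[[s]]$. The discriminant of $t^\mu+a_{\mu-1}t^{\mu-1}+\dots+a_0$ is weighted homogeneous of weight $\mu(\mu-1)$ when $a_k$ has weight $\mu-k\le\mu$. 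So each of its monomials contains at least $\mu-1$ of the $a_k$, all lying in $sK[[s]]$, and hence $s^{\mu-1}\mid\operatorname{Disc}(G_1)$. No condition $p>\mu$ is needed here; $d<p$ enters only through coprimality, which makes the resultant nonzero.

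\textbf{Comparison.} With this filled in, your route proves the lemma, even with the slightly sharper bound $d'(d'-1)-(d'-n)^2$. The paper's count instead stays entirely within linear algebra over $K$, with no completions, algebraic closures, or resultant degree bookkeeping.
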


\begin{proof}
The conclusion is trivial if $\Lambda$ is empty. If the lines are
vertical, interchanging the coordinates and replacing $F(x,y)$ by
$F(y,x)$ reduces to the nonvertical case. Set $s:=|\Lambda|$, enumerate
the lines as $\lambda_1,\ldots,\lambda_s$, let $a\in K$ be their common
slope and $b_i$ the intercept of $\lambda_i$. The $b_i$ are distinct.
Write $q_i:=q_{\lambda_i}=(\xi_i,a\xi_i+b_i)$ and
$\mu_i:=\mu_{\lambda_i}$. The invertible linear change of coordinates
$X=x$, $Y=y-ax$ sends $\lambda_i$ to $Y=b_i$ and $q_i$ to $(\xi_i,b_i)$,
and transforms $F$ into
\[
 T(X,Y):=F(X,Y+aX),
\]
which is nonzero and squarefree of degree at most $d$, since the
substitution preserves degree and factorization into irreducibles.
Moreover $T(\xi_i+u,b_i)=F(q_i+u(1,a))$, so the restriction of $T$ to
$Y=b_i$ has multiplicity $\mu_i$ at $(\xi_i,b_i)$.

Factors of $T$ depending only on $Y$ are constant along each horizontal
line. They do not affect the multiplicity of a nonzero restriction, but
they are common factors of $T$ and $T_X$. We remove them. Write
$T=A(Y)G(X,Y)$, where $A$ is the product of the irreducible factors of
$T$ independent of $X$ (with $A=1$ if there are none) and $G:=T/A$. Then
$G$ is squarefree, and since $T(X,b_i)\ne0$ we have $A(b_i)\ne0$, so
$T(X,b_i)$ and $G(X,b_i)$ differ by a nonzero scalar and have the same
multiplicity at $X=\xi_i$. If $G$ is constant every $\mu_i$ is zero and
we are done.

Suppose $G$ is nonconstant. We claim that $G_X\ne0$ and that $G$ and
$G_X$ are coprime. Every irreducible factor $f$ of $G$ depends on $X$, and
$1\le\deg_Xf\le d<p$, so differentiating multiplies the leading
$X$-coefficient of $f$ by a nonzero element of $K$. Thus $f_X\ne0$ and
$\deg_Xf_X<\deg_Xf$. Write $G=fQ$. Squarefreeness gives $f\nmid Q$. If
$f\mid G_X$, then $G_X=f_XQ+fQ_X$ and unique factorization would give
$f\mid f_X$, which is impossible. This proves the claim, and in particular
$G_X\ne0$. The facts about unique factorization used here can be found in
\cite[Proposition~1.24 and Theorem~1.32]{Milne}.

Let $h_i:=(\mu_i-1)_+$. Both $G(\xi_i+u,b_i)$ and its $u$-derivative
$G_X(\xi_i+u,b_i)$ are divisible by $u^{h_i}$. This is trivial when
$h_i=0$. When $h_i>0$, it follows from the differentiation identity
preceding the lemma. Each divisibility requires the first $h_i$
coefficients of the restriction to be zero. We will show that these
conditions are independent for polynomials of sufficiently large degree.
Every multiple of $G$ or $G_X$ satisfies them, so comparing dimensions
bounds $\sum_ih_i$.

\Needspace{10\baselineskip}
Set $d_0:=\deg G$ and $d_1:=\deg G_X$, so $d_0\le d$ and $d_1\le d_0-1$.
For $j\ge0$ let $\Pol_j$ be the space of polynomials in $K[X,Y]$ of total
degree at most $j$, so $\dim\Pol_j=\binom{j+2}2$. Fix an integer
$M\ge d_0+d_1$, to be taken large, and let $W_M\subseteq\Pol_M$ be the
subspace of polynomials $H$ with
\[
 u^{h_i}\mid H(\xi_i+u,b_i)\qquad(1\le i\le s),
\]
that is, with the coefficients of $u^0,\ldots,u^{h_i-1}$ in
$H(\xi_i+u,b_i)$ equal to zero for each $i$. Each such coefficient is a
linear functional on $\Pol_M$, so $W_M$ is the common kernel of
$\sum_ih_i$ linear functionals.

These functionals are independent for $M$ large. To see this we
construct, for each prescribed coefficient, a polynomial for which that
coefficient is $1$ and all the others are $0$. We use Lagrange interpolation
in $Y$. For $1\le i\le s$ and
$0\le j<h_i$ put
\[
 B_{i,j}(X,Y):=(X-\xi_i)^j\prod_{\substack{1\le v\le s\\v\ne i}}
 \frac{Y-b_v}{b_i-b_v},
\]
the denominators being nonzero since the $b_i$ are distinct. Then
$B_{i,j}(\xi_i+u,b_i)=u^j$ and $B_{i,j}$ vanishes on every other line
$Y=b_v$, so exactly one of the prescribed coefficients of $B_{i,j}$
equals $1$ and the rest are $0$. For $M$ large all $B_{i,j}$ lie in
$\Pol_M$, and their linear combinations prescribe the coefficients
independently. Hence
\[
 \dim W_M=\dim\Pol_M-\sum_{i=1}^sh_i.
\]

On the other hand, every multiple of $G$ or of $G_X$ has the required
divisibility on each line, and so does every sum of such multiples. Thus
\[
 J_M:=G\Pol_{M-d_0}+G_X\Pol_{M-d_1}\subseteq W_M,
\]
where $G\Pol_{M-d_0}$ denotes the set of products $GU$ with
$U\in\Pol_{M-d_0}$. Since $G$ and $G_X$ are coprime, a polynomial
divisible by both is divisible by $GG_X$, so
\[
 G\Pol_{M-d_0}\cap G_X\Pol_{M-d_1}=GG_X\Pol_{M-d_0-d_1}.
\]
Multiplication by a nonzero polynomial is injective, so the dimension
formula for a sum of subspaces gives
\[
 \dim\Pol_M-\dim J_M
 =\binom{M+2}2-\binom{M-d_0+2}2-\binom{M-d_1+2}2+\binom{M-d_0-d_1+2}2
 =d_0d_1.
\]
Finally $J_M\subseteq W_M$ gives $\dim J_M\le\dim W_M$, so
\[
 \sum_{i=1}^sh_i=\dim\Pol_M-\dim W_M\le\dim\Pol_M-\dim J_M=d_0d_1\le d(d-1).
 \qedhere
\]
\end{proof}

\section{Proof of Theorem~\ref{thm:main}}\label{sec:incidences}

We construct the polynomial family and use the incidences to obtain
lower bounds for the multiplicities on distinct parallel auxiliary
lines. We then apply Lemma~\ref{lem:roots}.
We count the incidences at points with identically zero restrictions
separately, then choose the degree to balance the two resulting terms.

\begin{proof}[Proof of Theorem~\ref{thm:main}]
Let $\LL_0\subseteq\LL$ be the set of nonvertical lines and
$n_0:=|\LL_0|$. Each point lies on exactly one vertical line, so
$I(P,\LL)\le m+I(P,\LL_0)$, and it suffices to estimate $I(P,\LL_0)$.
We may assume $m,n_0\ge1$. Each line of $\LL_0$ has a unique equation
$y=ax+b$ with $a,b\in k$.

\smallskip
\noindent\emph{Constructing the polynomial family.}
We say that $f\in k[x,y]$ vanishes identically on the line $y=ax+b$ if
$f(x,ax+b)$ is the zero polynomial in $k[x]$. This is a polynomial
identity. Over a finite field it is stronger than vanishing at the
points of the line in $k^2$. Over $\F_p$, for instance, $x^p-x$ vanishes
at every point of the $x$-axis without being divisible by $y$. We will
use polynomial identities throughout. The condition $f(x,ax+b)\equiv0$
is equivalent to $(y-ax-b)\mid f$. Indeed, dividing $f$ by the
polynomial $y-ax-b$, which is monic in $y$ over $k[x]$, gives
$f=(y-ax-b)Q+R$ with $Q\in k[x,y]$ and $R\in k[x]$, and substituting
$y=ax+b$ shows $R(x)=f(x,ax+b)$.

As in Section~\ref{sec:overview}, we construct a polynomial of small degree
in $x,y$ that also depends on the slope. Lines of different slopes will
correspond to different specializations. Introduce a further indeterminate $z$, which
will be specialized to the slope of a line, and seek a nonzero
$S\in k[x,y,z]$ satisfying
\begin{equation}\label{eq:interpolation}
 S(x,ax+b,a)\equiv0\quad\text{for every line }y=ax+b\text{ in }\LL_0,
\end{equation}
the identity being in $k[x]$ with $a,b$ fixed. Thus $S(x,y,a)$ vanishes
identically on every line of slope $a$ in $\LL_0$. Some specializations
$S(x,y,a)$ may be zero polynomials. Only $S$ itself is required to be
nonzero.

Fix an integer $d$ with $1\le d<p$, to be chosen at the end of the
proof. For each integer $e\ge0$ let $\mathcal V_{d,e}$ be the
$k$-vector space of polynomials
\[
 S(x,y,z)=\sum_{j=0}^ez^jS_j(x,y),\qquad S_j\in k[x,y],\quad\deg S_j\le d.
\]
Since there are $\binom{d+2}2$ monomials $x^\alpha y^\beta$ with
$\alpha+\beta\le d$,
\[
 \dim_k\mathcal V_{d,e}=(e+1)\binom{d+2}2.
\]
For a line $\ell:y=ax+b$ in $\LL_0$ and $S\in\mathcal V_{d,e}$, the
polynomial $S(x,ax+b,a)$ has degree at most $d$ in $x$. Let
$c_{\ell,i}(S)$ denote its coefficient of $x^i$, $0\le i\le d$. Each
$c_{\ell,i}$ is a linear function of the coefficients of $S$, and
\eqref{eq:interpolation} is equivalent to the system
\begin{equation}\label{eq:coefficient-equations}
 c_{\ell,i}(S)=0\qquad(\ell\in\LL_0,\ 0\le i\le d)
\end{equation}
of $n_0(d+1)$ homogeneous linear equations. A nonzero solution exists
whenever $(e+1)\binom{d+2}2>n_0(d+1)$, that is, whenever
$(d+2)(e+1)>2n_0$. Let
\begin{equation}\label{eq:e}
 e:=\left\lfloor\frac{2n_0}{d+2}\right\rfloor
\end{equation}
be the least nonnegative integer with this property, so that
$e\le2n_0/(d+2)$, and fix a nonzero $S\in\mathcal V_{d,e}$ satisfying
\eqref{eq:interpolation}. The two degree bounds have different roles.
The degree $d$ determines the bound in the multiplicity lemma, while
$e$ will be the loss in multiplicity at each point. The dimension count
allows us to decrease $e$ by increasing $d$. Without the slope variable, a nonzero
polynomial vanishing identically on every line of $\LL_0$ would be
divisible by the product of their $n_0$ distinct irreducible linear
equations and so would have degree at least $n_0$. Even when $n_0<p$,
Lemma~\ref{lem:roots} applied to it would bound the excess multiplicity
only by $O(n_0^2)$, which in the balanced case is no better than the
trivial bound. The slope variable is what permits $d$ to be small.

We may assume $S$ is squarefree. Indeed, replace $S$ by the product of its distinct irreducible
factors. The new polynomial divides the old one, so neither degree
increases. For each line $y=ax+b$ in $\LL_0$, the identity
$S(x,ax+b,a)\equiv0$ forces some irreducible factor of $S$ to have zero
restriction, since $k[x]$ is a domain, and that factor remains in the
new product.

\smallskip
\noindent\emph{Multiplicity at an incident point.}
The identities \eqref{eq:interpolation} involve one slope at a time. We
now combine the identities for all the lines through a given point.
We will use these identities to bound the order of vanishing along
the line of slope $z$ through that point.
For $q=(q_1,q_2)\in P$ let $r(q)$ be the number of lines of $\LL_0$
through $q$. Let $u$ be a further indeterminate and define
\[
 R_q(u,z):=S(q_1+u,q_2+zu,z)=\sum_{j=0}^dc_{q,j}(z)u^j\in k[u,z],
\]
so $c_{q,j}\in k[z]$ is the coefficient of $u^j$. For fixed $a\in k$,
$R_q(u,a)$ is the restriction of $S(x,y,a)$ to the line of slope $a$
through $q$, parametrized by $(x,y)=(q_1+u,q_2+au)$ with $u=0$ at $q$.
If $y=ax+b$ is a line of $\LL_0$ through $q$, then $q_2=aq_1+b$, so
$q_2+au=a(q_1+u)+b$ and \eqref{eq:interpolation} gives
$R_q(u,a)\equiv0$, that is, $c_{q,j}(a)=0$ for every $j$. Distinct lines
through $q$ have distinct slopes, so each $c_{q,j}$ vanishes at $r(q)$
distinct elements of $k$.

We compare these $r(q)$ roots with the bound $\deg c_{q,j}\le e+j$.
Indeed, a monomial $x^\alpha y^\beta z^\gamma$
of $S$, with $\alpha+\beta\le d$ and $\gamma\le e$, becomes
$(q_1+u)^\alpha(q_2+zu)^\beta z^\gamma$. A term of $u$-degree $j$
selects at most $j$ factors $zu$ from $(q_2+zu)^\beta$, and so has
$z$-degree at most $\gamma+j\le e+j$. Consequently $c_{q,j}$, having at
least $r(q)$ distinct roots, is the zero polynomial whenever
$j<r(q)-e$. This gives
\begin{equation}\label{eq:forced}
 u^{(r(q)-e)_+}\mid R_q(u,z).
\end{equation}

\smallskip
\noindent\emph{One multiplicity estimate for all the points.}
The divisibility \eqref{eq:forced} holds for each point separately. To
sum these multiplicities with Lemma~\ref{lem:roots}, we need a distinct
parallel line through every point. Specializing $z$ to an element
$a\in k$ can cause these lines to coincide, and
the specialization $S(x,y,a)$ need not be squarefree. If $k$ is finite,
there may be no slope that avoids these problems. We therefore keep
$z$ indeterminate and work over $K:=k(z)$, the field of rational
functions in $z$ with coefficients in $k$. We use $z$ as the common
slope of the auxiliary lines. The original configuration has the same
incidences over $K$, which has the same characteristic as $k$.
We now regard $S$ as an element of $K[x,y]$, of degree
at most $d$ in $x,y$, and $R_q$ as an element of $K[u]$.
For $q=(q_1,q_2)\in P\subset k^2\subset K^2$ let
\[
 \lambda_q:=\{(q_1+u,q_2+zu):u\in K\}\subset K^2,
\]
the auxiliary line through $q$. Then $R_q$ is the
restriction of $S$ to $\lambda_q$ with $q$ at $u=0$. These lines have
common slope $z$, and are distinct
for distinct $q,q'\in P$. Equal intercepts would give
$q_2-zq_1=q_2'-zq_1'$, and comparing coefficients of $z$ gives $q=q'$,
since $z$ is transcendental over $k$. Thus $\lambda_q\cap P=\{q\}$.
Even if several points lie on the same given line, their auxiliary
lines are distinct, as required by the lemma.

To apply the multiplicity lemma over $K$, we also need $S$ to remain
squarefree after this change of coefficient field. An irreducible
factor of $S$ involving $x$ or $y$ has no common
nonconstant divisor of its coefficients in $k[z]$, since such a divisor
would be a proper factor. By Gauss's lemma it remains irreducible
in $K[x,y]$. Two distinct such factors $f,g$ cannot
become scalar multiples over $K$. Indeed, clearing denominators would
give $af=bg$ with nonzero $a,b\in k[z]$. Since $f$ is prime in
$k[x,y,z]$ and cannot divide $b$, it would divide $g$, contradicting
their distinctness up to constant factors. Factors depending only on
$z$ become units of $K$, so $S$ remains squarefree in $K[x,y]$.

Lemma~\ref{lem:roots} requires the restriction to each line to be
nonzero, so set
\[
 E:=\{q\in P:R_q(u,z)\equiv0\}.
\]
For $q\in P\setminus E$ let $\mu(q)$ be the largest integer $\mu\ge0$
with $u^\mu\mid R_q$ in $K[u]$. This is the multiplicity of the
restriction of $S$ to $\lambda_q$ at $q$, and by \eqref{eq:forced}
$\mu(q)\ge(r(q)-e)_+$. Lemma~\ref{lem:roots}, applied to $S\in K[x,y]$
and the lines $\{\lambda_q:q\in P\setminus E\}$ with the point $q$ chosen
on $\lambda_q$, gives $\sum_{q\in P\setminus E}(\mu(q)-1)_+\le d(d-1)$,
and hence
\begin{equation}\label{eq:excess}
 \sum_{q\in P\setminus E}(r(q)-e-1)_+\le d(d-1).
\end{equation}
Since $r(q)\le e+1+(r(q)-e-1)_+$, the incidences outside $E$ number at
most $(m-|E|)(e+1)+d(d-1)$.

\smallskip
\noindent\emph{Identically zero restrictions.}
It remains to count the incidences on $E$. Each point of $E$ gives a
distinct factor of $S$ of degree one in $z$. This will bound $|E|$ by $e$.
If $q=(q_1,q_2)\in E$, substituting $u=x-q_1$ in $R_q\equiv0$ gives
$S(x,q_2+z(x-q_1),z)\equiv0$, and division by $y-q_2-z(x-q_1)$, which is
monic in $y$ over $k[x,z]$, gives $Q_q\in k[x,y,z]$ with
\[
 S(x,y,z)=\bigl(y-q_2-z(x-q_1)\bigr)Q_q(x,y,z).
\]
These factors are irreducible, being monic and linear in $y$, and
distinct for distinct $q$, since their coefficients determine $q$. Their
product therefore divides $S$. Each has degree one in $z$, so
$h:=|E|$ satisfies $h\le e$.

A line containing $s$ points of $E$ contributes $s\le1+\binom s2$
incidences, and two points lie on at most one common line, so
$I(E,\LL_0)\le n_0+\binom h2$. Since $h\le e$, we have
$\binom h2\le h(e+1)$. Thus the same allowance of $e+1$ incidences
per point covers the pair term on $E$. Combining with \eqref{eq:excess} yields
\begin{equation}\label{eq:parameter}
 I(P,\LL_0)\le(m-h)(e+1)+d(d-1)+n_0+\binom h2\le m(e+1)+d(d-1)+n_0.
\end{equation}
Adding the at most $m$ incidences on vertical lines and using
$e\le2n_0/(d+2)$,
\[
 I(P,\LL)\le m(e+2)+d(d-1)+n_0\le2m+n_0+\frac{2mn_0}{d+2}+d^2.
\]

\smallskip
\noindent\emph{Choosing the degree.}
The term $2mn_0/(d+2)$ decreases in $d$ and $d^2$ increases, and they
balance when $d$ is of order $(mn_0)^{1/3}$. Put
$D:=(mn_0)^{1/3}\ge1$ and let
\[
 d=\lfloor D\rfloor\quad\text{if }p=\infty,\qquad
 d=\min\{\lfloor D\rfloor,p-1\}\quad\text{otherwise},
\]
so that $1\le d<p$ and $d\le D$. If $d=\lfloor D\rfloor$ then $d+2>D$,
and otherwise $d=p-1$ and $d+2>p$. Hence, using $mn_0/D=D^2$,
\[
 d^2\le D^2,\qquad\frac{2mn_0}{d+2}\le2D^2+\frac{2mn_0}p,
\]
and therefore
\[
 I(P,\LL)\le3(mn_0)^{2/3}+\frac{2mn_0}p+2m+n_0.
\]
Since $n_0\le n$, the theorem follows, with the explicit bound
$I(P,\LL)\le3(mn)^{2/3}+2m+n+2mn/p$.
\end{proof}

\section{Examples}\label{sec:examples}

We first give a grid example showing that the main term is necessary. For an integer
$1\le j<p$ let $[j]:=\{1,\ldots,j\}\subset k$, integers being identified
with their images in $k$. For positive integers $r,s$ with $2rs<p$ put
\[
 P=[r]\times[2rs],\qquad\LL=\{y=ax+b:a\in[s],\ b\in[rs]\},
\]
so $m=2r^2s$ and $n=rs^2$. For $x\in[r]$ the integer representatives
satisfy $1\le ax+b\le2rs$, so each line contains exactly $r$ points of
$P$ and
\[
 I(P,\LL)=r^2s^2=2^{-2/3}(mn)^{2/3}.
\]
See also \cite[Example~5]{SDZ}. For $8\le N\le p^{3/2}$ take
$r=s=\lfloor N^{1/3}/2\rfloor$. Then $r\ge N^{1/3}/4$, $2r^3\le N$ and
$2r^2<p$, and adding points and lines until $m=n=N$ gives
$I(P,\LL)\ge N^{4/3}/256$. For $1\le N<8$ a single incidence suffices.
In characteristic zero there is no size restriction. One line with many
points, or one point on many lines, shows that the linear terms are
necessary.

To see why the term $mn/p$ is necessary, note that in $\F_p^2$ a random
line contains a given point with probability about $1/p$. Thus $m$
points and $n$ lines chosen at random give about $mn/p$ incidences. Fix an
$N$-point subset of $\F_p^2$ with $N\le p^2$, and choose $N$ of the
$p^2$ nonvertical lines uniformly at random. Exactly $p$ of the $p^2$
nonvertical lines pass through each point, and each line is selected
with probability $N/p^2$. The expected number of incidences is therefore
$N^2/p$, and some choice of lines
gives at least $N^2/p$ incidences.
Since $N^2/p\ge N^{4/3}$ for $N\ge p^{3/2}$, the balanced bound
$N^{4/3}+N^2/p$ is sharp throughout $1\le N\le p^2$. Over an extension
field $\F_q$ the dependence is on $p$ rather than $q$. The prime-subfield
example in the introduction rules out replacing $mn/p$ by $mn/q$. We do
not claim that the estimate is optimal at every scale over extension
fields.

The degree restriction in Lemma~\ref{lem:roots} is essential. In
characteristic $p$ consider $F(x,y):=x^p-y$, which is irreducible, hence
squarefree, being linear in $y$ with unit leading coefficient, and
satisfies $\partial F/\partial x=0$. For every $\xi\in k$ the restriction
of $F$ to the horizontal line $y=\xi^p$ is $(x-\xi)^p$, with
multiplicity $p$ at $(\xi,\xi^p)$. Since $\xi\mapsto\xi^p$ is injective,
an infinite field yields arbitrarily many distinct lines each
contributing $p-1$ to the left side of \eqref{eq:roots}, while
$\deg F=p$. Thus the lemma fails at degree $p$. In the incidence
proof, the restriction $d<p$ limits how small we can make the term
$mn/d$, producing the additional term $mn/p$.

\section{Applications}\label{sec:applications}

\subsection{Additive energy on the paraboloid}

In this subsection and the next, $p$ is a prime such that $-1$ is not
a square in $\F_p$, so that $v\cdot v=0$ for $v\in\F_p^2$ implies
$v=0$, where $\cdot$ is the usual dot product. This holds exactly when
$p\equiv3\pmod4$. For $A\subseteq\F_p^2$ put
\[
 \widetilde A:=\{(v,v\cdot v):v\in A\}\subseteq\F_p^3,
\]
and for a finite subset $B$ of an abelian group let
\[
 E(B):=|\{(b_1,b_2,b_3,b_4)\in B^4:b_1+b_2=b_3+b_4\}|
\]
denote its additive energy. We will bound this quantity for
$B=\widetilde A$ by counting right-angled corners in the plane,
following the approach of
Rudnev and Shkredov \cite[Section~4]{RS} and \cite{LewRect}. The bound
will be used in the next subsection for sets of all sizes.

\begin{proposition}\label{prop:energy}
If $A\subseteq\F_p^2$ and $N:=|A|\ge1$, then
\begin{equation}\label{eq:energy}
 E(\widetilde A)\lesssim N^2\log(2N)+\frac{N^3}p.
\end{equation}
\end{proposition}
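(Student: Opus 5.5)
The plan is to reduce $E(\widetilde A)$ to counting rectangles in $A$, and then to count rectangles by a dyadic decomposition of rich lines, controlled by Theorem~\ref{thm:main}.

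\emph{Reduction to rectangles.} If $a+b=c+d$ and $a\cdot a+b\cdot b=c\cdot c+d\cdot d$ with $a,b,c,d\in A$, substituting $d=a+b-c$ gives $(a-c)\cdot(b-c)=0$. So $E(\widetilde A)$ is the number of triples $(a,b,c)\in A^3$ with a right angle at $c$ and $a+b-c\in A$.

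Since $-1$ is not a square, no nonzero vector is isotropic. Hence $(a-c)\cdot(b-c)=0$ forces $a=c$, or $b=c$, or $a,c,b,d$ to be the vertices of a genuine rectangle. In the latter case the directions of $a-c$ and $b-c$ are distinct and perpendicular. The degenerate triples contribute at most $2N^2$. It remains to show
\[
\#\{\text{rectangles with vertices in }A\}\lesssim N^2\log(2N)+N^3/p .
\]

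\emph{Rich lines.} For $k\ge2$ let $\LL_k$ be the set of lines in $\F_p^2$ containing between $k$ and $2k$ points of $A$. Applying Theorem~\ref{thm:main} to $A$ and $\LL_k$ gives
\[
k|\LL_k|\lesssim (N|\LL_k|)^{2/3}+N+|\LL_k|+N|\LL_k|/p .
\]
When $k\ge C_0(1+N/p)$ for a large constant $C_0$, the last two terms can be absorbed into the left side, and this yields $|\LL_k|\lesssim N^2/k^3+N/k$. For smaller $k$ we use only the trivial count.

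\emph{Counting rectangles by side richness.} Each rectangle has two sides through the vertex $c$, lying on lines $\ell_1\ni a$ and $\ell_2\ni b$. Assign the rectangle to dyadic classes $j\le k$, where $\ell_2\in\LL_j$ is the poorer side line and $\ell_1\in\LL_k$. I will use two encodings of a rectangle.

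\begin{itemize}
\item[(i)] Choose $\ell_2$ and $b,c\in\ell_2\cap A$, giving at most $|\LL_j|\,j^2$ choices. The vertex $a$ then lies on the perpendicular to $\ell_2$ through $c$. That perpendicular is $\ell_1$, which has at most $2k$ points, and $d$ is determined.
\item[(ii)] Choose $a\in A$ and the line $\ell_2$. Then $c$ is the foot of the perpendicular from $a$ to $\ell_2$; this foot exists because $\ell_2$ is not isotropic. Then choose $b\in\ell_2$.
\end{itemize}

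Encoding (ii) gives at most about $N\,j\,|\LL_j|$ rectangles. Combined with the rich-line bound this is about $N^3/j^2+N^2$ for rich $j$. Encoding (i) gives at most about $k\,j^2|\LL_j|$, and summing over the classes $\LL_k$ of $\ell_1$ as well, at most about $j^2|\LL_j|\cdot j|\LL_k|k$.

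The plan is to take the minimum of the two encodings in each pair of dyadic classes $(j,k)$ and check that each pair contributes $O(N^2)$. Over the $O(\log^2 N)$ pairs I expect the sum to collapse to a single logarithm, because only the diagonal range $j\approx k$ is critical. In the poor range $k\lesssim1+N/p$, I bound the count instead by $\sum_c\sum_{\theta} r_c(\theta)\,r_c(\theta^\perp)$, where $r_c(\theta)$ is the number of points of $A\setminus\{c\}$ on the line through $c$ in direction $\theta$. Using Cauchy--Schwarz and $\sum_\theta r_c(\theta)\le N$ with each $r_c(\theta)\lesssim 1+N/p$, this is $\lesssim N\cdot N\cdot(1+N/p)$, which is the allowed $N^2+N^3/p$.

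\emph{Main obstacle.} The hardest step is the bookkeeping in the rich range. The naive bound $\sum_\ell|\ell\cap A|^3$ for right angles is too weak, since a single line containing all $N$ points already gives $N^3$. The constraint that the fourth vertex $d$ lies in $A$ must therefore be used, through encoding (ii), precisely when one side line is very rich.

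If the two-encoding minimum does not give $O(N^2)$ per dyadic pair, I would first try the Rudnev--Shkredov variant. There one fixes the direction $\theta$ of the sides $ac$ and $bd$, and counts pairs of parallel lines in direction $\theta^\perp$ through the translates $A$ and $A-v$, with $v$ parallel to $\theta$. This is an incidence problem between the points $A$ and the lines perpendicular to $\theta$ through $A\cap(A-v)$, to which Theorem~\ref{thm:main} is applied directly. The logarithm would then come from a dyadic pigeonholing over $|A\cap(A-v)|$.
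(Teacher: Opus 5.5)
Your reduction to right angles and rectangles, the rich-line bound $|\LL_k|\lesssim N^2k^{-3}+Nk^{-1}$ for $k\ge t_0:=C_0(1+N/p)$, and the poor-range estimate $O(N^2t_0)$ are all correct and match the paper. The gap is in the rich range, and it lies off the diagonal rather than on it.

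You anchor both encodings at the \emph{poorer} side line $\ell_2\in\LL_j$. Encoding (i) then gives $kj^2|\LL_j|\lesssim kN^2/j+jkN$, which grows with the richness $k$ of the other side. Encoding (ii) gives $Nj|\LL_j|\lesssim N^3/j^2+N^2$, which is large whenever $j\ll\sqrt N$. For instance, take $N\le p$, $j=N^{1/4}$ and $k=N^{1/2}$. The better of your two bounds is then of order $N^{9/4}$, so the claimed $O(N^2)$ per dyadic pair fails. Moreover, when $j<t_0\le k$ the rich-line bound is not available for $\LL_j$ at all, so neither encoding is controlled there. The Rudnev--Shkredov fallback is only sketched, so as written the rich range is not proved.

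The repair is to charge each right angle to its \emph{richer} supporting line, as the paper does. Suppose that line $\ell$ satisfies $t\le|\ell\cap A|<2t$. The other side also has fewer than $2t$ points, so $\ell$ carries $O(t^3)$ right angles. Your encoding (ii), applied to $\ell$ in place of $\ell_2$, gives $O(tN)$. This works because the perpendiculars to $\ell$ through its points are distinct parallel lines, which meet $A$ in at most $N$ points in total, and each such point determines its foot on $\ell$.

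Thus scale $t$ contributes $\min\{t^3,tN\}(N^2t^{-3}+Nt^{-1})\lesssim N^2$. The poorer side needs no dyadic decomposition, and summing over dyadic $t\ge t_0$ gives a single logarithm.

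Note also that the condition $d\in A$ is never used, either by the paper or by your encoding (ii); the paper discards it and counts all right angles. Your concern about a line containing all of $A$ is unfounded, because such a set has no nondegenerate right angles at all. The weak quantity is the crude bound $\sum_\ell|\ell\cap A|^3$, not the right-angle count.
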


\begin{proof}
An ordered quadruple counted by $E(\widetilde A)$ projects to $x,y,z,w\in A$ with
$w=x+z-y$ and
\[
 (x-y)\cdot(z-y)=0.
\]
The cases $x=y$ and $z=y$ contribute at most $2N^2$. Otherwise $x,y,z$
are distinct, since $x=z$ would give $(x-y)\cdot(x-y)=0$ and hence
$x=y$. The triple $(x,y,z)$ is then a corner, with perpendicular
supporting lines through $x,y$ and through $y,z$, and it determines $w$.
Discarding the condition $w\in A$ gives an upper bound, so it suffices
to count these corners.

For a line $\ell$ put $n_A(\ell):=|A\cap\ell|$, and for $t\ge1$ let
$\LL_t$ be the set of lines with $n_A(\ell)\ge t$. Theorem~\ref{thm:main}
gives
\[
 t|\LL_t|\lesssim(N|\LL_t|)^{2/3}+N+|\LL_t|+N|\LL_t|/p,
\]
so there is an absolute constant $C$ such that for
$t\ge t_0:=C(1+N/p)$ the last two terms can be absorbed into the left
side, yielding
\begin{equation}\label{eq:rich-app}
 |\LL_t|\lesssim N^2t^{-3}+Nt^{-1}\qquad(t\ge t_0).
\end{equation}

Associate each corner to the supporting line containing more points of
$A$, breaking ties arbitrarily. Corners associated to a line with fewer than
$t_0$ points contribute $O(N^2t_0)$. Once two adjacent vertices are
fixed, the third lies on a determined perpendicular line with fewer than
$t_0$ points of $A$. Now fix a line $\ell$ with $t\le n_A(\ell)<2t$. The
number of corners associated to $\ell$ is $O(t^3)$, since both supporting
lines have fewer than $2t$ points. The number is also $O(tN)$.
Since $v\cdot v\ne0$ for every nonzero $v$, the line $\ell$ is not
parallel to its perpendiculars. The perpendiculars through distinct points
of $\ell$ are therefore distinct parallel lines. Their intersections
with $A$ contain at most $N$ points in total, and for each such point
the remaining vertex on $\ell$ has fewer than $2t$ choices.
By \eqref{eq:rich-app} the contribution at scale $t$ is at most
\[
 \min\{t^3,tN\}\bigl(N^2t^{-3}+Nt^{-1}\bigr)\lesssim N^2,
\]
using $t^3$ for $t\le\sqrt N$ and $tN$ for $t\ge\sqrt N$. Summing over
$t=t_0,2t_0,4t_0,\ldots$ up to $N$ gives $O(N^2\log(2N))$, and together
with the contribution below $t_0$ this proves \eqref{eq:energy}.
\end{proof}

\subsection{The restriction problem in prime fields}

Let $\PP:=\{(v,v\cdot v):v\in\F_p^2\}$ be the paraboloid and let $d\sigma$
assign mass $p^{-2}$ to each of its points. For $s\in\F_p$ write
$e_p(s):=\exp(2\pi is/p)$, using any integer representative of $s$ in the
exponential. For $f:\PP\to\mathbb C$ the extension operator is
\[
 (f\,d\sigma)^\vee(x):=p^{-2}\sum_{\xi\in\PP}f(\xi)e_p(x\cdot\xi),
 \qquad x\in\F_p^3,
\]
with norms on $\F_p^3$ taken with respect to counting measure and norms
on $\PP$ with respect to $d\sigma$. The restriction problem asks how
small $r$ can be in the following inequality, with a constant
independent of $p$.

\begin{corollary}\label{cor:restriction}
For every $r>10/3$,
\[
 \|(f\,d\sigma)^\vee\|_{L^r(\F_p^3)}\lesssim_r\|f\|_{L^2(\PP,d\sigma)},
\]
uniformly over primes $p$ for which $-1$ is not a square in $\F_p$.
\end{corollary}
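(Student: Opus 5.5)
The plan is to deduce Corollary~\ref{cor:restriction} from Proposition~\ref{prop:energy} through the Mockenhaupt--Tao reduction for the paraboloid in $\F_p^3$, in the refined form used in \cite{LewBilI,LewBilII}. That machinery turns an additive energy bound for arbitrary subsets $\widetilde A\subseteq\PP$ into an $L^2\to L^r$ extension estimate. It combines three ingredients. The first is the Gauss sum bound $|(d\sigma)^\vee(x)|\le p^{-1}$ for $x\ne0$, which uses that $p$ is odd, with $(d\sigma)^\vee(0)=1$. The second is the trivial $L^\infty$ bound $\|(f\,d\sigma)^\vee\|_\infty\le\|f\|_{L^1(d\sigma)}$. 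The third is the identity $\|(1_A\,d\sigma)^\vee\|_{L^4(\F_p^3)}^4=p^{-5}E(\widetilde A)$, obtained by expanding the fourth power and using orthogonality of characters on $\F_p^3$. I expect the argument to reduce to an energy hypothesis of the form $E(\widetilde A)\lesssim N^{2+\varepsilon}$ for $N\lesssim p$, with the threshold $10/3$ corresponding exactly to energy exponent $2$.

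\emph{Step 1: restricted estimates.} First treat $f=1_A$, with $N=|A|$, so that $\|f\|_{L^2(d\sigma)}=N^{1/2}p^{-1}$. Proposition~\ref{prop:energy} gives
\[
 \|(1_A\,d\sigma)^\vee\|_4\lesssim p^{-5/4}\bigl(N^2\log(2N)+N^3/p\bigr)^{1/4}.
\]
In the range $N\le p$ the first term dominates, and the bound is $N^{1/2}(\log N)^{1/4}p^{-5/4}$. For larger $N$ the term $N^3/p$ is at most the Stein--Tomas level, so the $L^4$ bound is no worse than the one Stein--Tomas already provides.

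\emph{Step 2: the $TT^*$ argument below $L^4$.} To reach $r<4$, pass to the dual form: bound $\|\widehat g|_{\PP}\|_{L^2(d\sigma)}^2=\langle g*(d\sigma)^\vee,g\rangle$ for $g$ on $\F_p^3$. Split $(d\sigma)^\vee=\delta_0+K$. The $\delta_0$ part contributes $\|g\|_2^2\le\|g\|_{r'}^2$. For the $K$ part, decompose $g$ dyadically into level sets and expand the pairing; the energy bound of Step 1 controls the interaction between level sets, because $\widehat{g}$ restricted to $\PP$, tested against $1_A$, is governed by $(1_A\,d\sigma)^\vee$. Then optimise between the energy bound, the $p^{-1}$ kernel bound, and the trivial bounds at each pair of scales. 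As in \cite{LewBilII}, I expect this optimisation to give a restricted weak-type estimate at $r=10/3$, losing only powers of $\log p$, precisely because the energy exponent is $2$ up to logarithms.

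\emph{Step 3: removing the losses.} Upgrade the restricted weak-type estimate at $10/3$ to strong type for every $r>10/3$ with constant independent of $p$. Interpolate with the trivial $L^2\to L^\infty$ bound, or with Stein--Tomas at $r=4$. The strict inequality $r>10/3$ provides a power saving, which absorbs the logarithmic losses and the sum over dyadic levels.

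The main obstacle will be Step~2: carrying out the Mockenhaupt--Tao level-set analysis so that the energy exponent $2$ really yields the endpoint $10/3$, rather than the weaker exponent a crude Hölder interpolation would give. A second point needs checking: the term $N^3/p$ in \eqref{eq:energy} must not dominate in the relevant ranges. The extremal sets for the estimate have $N\lesssim p$, where $N^3/p\le N^2$, and I would first verify this case by case at each dyadic scale.
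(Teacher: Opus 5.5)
There is a genuine gap in Step~2. You correctly identify it as the crux, but you do not supply it, and the mechanism you sketch there would not work. The pairing $\langle g*K,g\rangle$ is a sum over pairs of points of $G:=\operatorname{supp}g\subseteq\F_p^3$, and no subset of $\PP$ appears in it. Testing $\widehat g|_{\PP}$ against $1_A$ for $A\subseteq\PP$ only returns you, by duality, to bounding $\|(1_A\,d\sigma)^\vee\|_r$ for some $r<4$, which is the statement itself. Your Step~1 is an $L^4$ bound and says nothing below $r=4$ on its own. Interpolating it with Plancherel reaches the exponent $10/3$ only for $|A|\lesssim p^{3/2}$, and degrades to $r=4$ as $|A|\to p^2$. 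The size bound $|K|\le p^{-1}$ is not enough either: for $|g|\le1$ and $M:=|G|$ it yields only $\|\widehat g\|^2_{L^2(\PP,d\sigma)}\le M+M^2/p$.

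The missing idea is the slicing inequality that the paper takes from Lemma~10 of \cite{LewRect}. It uses the phase of $K$, not just its size.
\begin{itemize}
\item For $x=(x',x_3)$ with $x_3\ne0$ one has $K(x)=-p^{-1}e_p\bigl(-x'\cdot x'/(4x_3)\bigr)$, and $K(x)=0$ when $x_3=0\ne x'$.
\item Slice $G$ horizontally, $A_t:=\{v\in\F_p^2:(v,t)\in G\}$. Expanding $|x'-y'|^2$, the interaction between heights $t\ne t'$ becomes $p^2$ times values of $(h_{t'}\,d\sigma)^\vee$, with $|h_{t'}|\le1_{\widetilde{A_{t'}}}$. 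These values are taken at the points $(-2cx',c)$, where $c=-1/(4(t-t'))$ and $(x',t)\in G$.
\item For fixed $t'$ these points are distinct, so H\"older and the $L^4$ energy bound give
\[
 |\langle g*K,g\rangle|\le p^{-1/4}M^{3/4}\sum_{t}E(\widetilde{A_t})^{1/4}.
\]
\end{itemize}
Thus Proposition~\ref{prop:energy} is applied to the lifts of physical-space slices of $\operatorname{supp}g$, not to sets defined by $\widehat g$ on $\PP$.

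Your expectation that only sets with $N\lesssim p$ matter is also wrong. Slices may have up to $p^2$ points, and after H\"older in $t$ the $N^3/p$ term contributes $p^{-1/8}M^{3/4}$. This is at most $M^{7/10}$ only when $M\le p^{5/2}$. The proof therefore uses three regimes:
\begin{itemize}
\item for $M\le p^{5/3}$, the bound $M^{1/2}+p^{-1/2}M$;
\item for $p^{5/3}\le M\le p^{5/2}$, the slice bound;
\item for $M\ge p^{5/2}$, Plancherel, $\|\widehat g\|_{L^2(\PP,d\sigma)}\le p^{1/2}M^{1/2}$.
\end{itemize}
Together these give $\|\widehat g\|_{L^2(\PP,d\sigma)}\lesssim M^{7/10}(\log 2M)^{1/8}$. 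Your Step~3 is then routine: sum over the dyadic level sets of $g\in L^s$, $s<10/7$, which gives a convergent geometric series.
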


\begin{proof}
For $g:\F_p^3\to\mathbb C$ write
$\widehat g(\xi):=\sum_{x\in\F_p^3}g(x)e_p(-x\cdot\xi)$.
By duality, it suffices to prove
$\|\widehat g\|_{L^2(\PP,d\sigma)}\lesssim_s\|g\|_{L^s(\F_p^3)}$
for $1<s<10/7$. We first prove a bound in terms of the support size
when $|g|\le1$, then sum over dyadic level sets to treat general $g$.
We follow \cite[Lemma~10 and Section~5]{LewRect}. The energy estimate
in Proposition~\ref{prop:energy} handles intermediate support sizes.
For small and large supports we use Fourier decay and Plancherel,
respectively.

Let $g$ be nonzero with $|g|\le1$, let $G$ be its support, and put
$M:=|G|\ge1$. For $t\in\F_p$ let
$A_t:=\{v\in\F_p^2:(v,t)\in G\}$ be the slice of $G$ at height $t$,
and write $1_B$ for the indicator of $B$. Lemma~10 of \cite{LewRect}, together with the identity
$\|(1_{\widetilde A_t}d\sigma)^\vee\|_{L^4(\F_p^3)}
=p^{-5/4}E(\widetilde A_t)^{1/4}$, gives
\[
 \|\widehat g\|_{L^2(\PP,d\sigma)}
 \lesssim M^{1/2}+p^{-1/8}M^{3/8}
 \Bigl(\sum_{t\in\F_p}E(\widetilde A_t)^{1/4}\Bigr)^{1/2}.
\]
Empty slices have zero energy. Since $\sum_t|A_t|=M$, H\"older gives
$\sum_t|A_t|^{1/2}\le p^{1/2}M^{1/2}$ and
$\sum_t|A_t|^{3/4}\le p^{1/4}M^{3/4}$.
Using Proposition~\ref{prop:energy} and $|A_t|\le p^2$ therefore yields
\begin{equation}\label{eq:restriction-middle}
 \|\widehat g\|_{L^2(\PP,d\sigma)}
 \lesssim M^{1/2}+\bigl(\log(2p)\bigr)^{1/8}p^{1/8}M^{5/8}+p^{-1/8}M^{3/4}.
\end{equation}
For $p^{5/3}\le M\le p^{5/2}$ the right side is
$O((\log(2p))^{1/8}M^{7/10})$. For $M\le p^{5/3}$ the elementary
Fourier-decay bound
$\|\widehat g\|_{L^2(\PP,d\sigma)}\lesssim M^{1/2}+p^{-1/2}M$ gives
$O(M^{7/10})$, and for $M\ge p^{5/2}$ Plancherel gives
$\|\widehat g\|_{L^2(\PP,d\sigma)}\le p^{1/2}M^{1/2}\le M^{7/10}$.
See \cite[Section~5]{LewRect} for these two estimates. The logarithm
is needed only in the middle range, where $M\ge p^{5/3}$ and hence
$\log(2p)\le\log(2M)$. We have therefore proved
\begin{equation}\label{eq:restricted-support}
 \|\widehat g\|_{L^2(\PP,d\sigma)}
 \lesssim M^{7/10}\bigl(\log(2M)\bigr)^{1/8}
 \qquad(|g|\le1).
\end{equation}

Fix $1<s<10/7$ and now let $g$ be any function with
$\|g\|_{L^s(\F_p^3)}=1$. Then $|g|\le1$. For $j\ge0$ put
\[
 G_j:=\{x:2^{-j-1}<|g(x)|\le2^{-j}\},\qquad M_j:=|G_j|,
\]
so that $M_j\le2^{s(j+1)}$. Applying \eqref{eq:restricted-support} to
$2^jg1_{G_j}$ and summing by the triangle inequality gives
\begin{align*}
 \|\widehat g\|_{L^2(\PP,d\sigma)}
 &\lesssim\sum_{j:M_j>0}2^{-j}M_j^{7/10}
       \bigl(\log(2M_j)\bigr)^{1/8}\\
 &\lesssim_s\sum_{j\ge0}(j+1)^{1/8}2^{-j(1-7s/10)}
 \lesssim_s1.
\end{align*}
The last series converges because $s<10/7$. Duality gives the stated
range, since the conjugate exponent of $10/7$ is $10/3$.
\end{proof}

\subsection{Two-source extraction}\label{sec:extractor}

We now apply Theorem~\ref{thm:main} to Bourgain's construction
\cite{BourgainExt} to obtain a two-source extractor with min-entropy
rate near $1/3$ and exponentially small error. The aim is to produce
a bit taking each value with probability close to $1/2$, assuming
only that the two sources are independent and have sufficiently large
min-entropy. Following the terminology of
\cite{LewExtractor}, a rate near $1/3$ means every fixed rate greater
than $1/3$. Throughout this subsection $p$ is any odd prime. The
min-entropy of a finite random variable is
$H_\infty(X)=-\log_2\max_x\Pr(X=x)$.
Thus $H_\infty(X)\ge\rho\log_2p$ means that no single value has
probability greater than $p^{-\rho}$.

Let $\beta_p:\F_p\to\{0,1\}$ be the indicator of
$\{0,\ldots,(p-1)/2\}$, where we identify $\F_p$ with the residues
$\{0,\ldots,p-1\}$. We consider Bourgain's construction
\[
 \operatorname{Ext}_p(x,y)=\beta_p(xy+x^2y^2),
 \qquad x,y\in\F_p.
\]
The polynomial $xy+x^2y^2$ is the dot product of $(x,x^2)$ and
$(y,y^2)$.

\begin{corollary}\label{cor:extractor}
Fix $\rho>1/3$. There is $c_\rho>0$ such that if $X,Y$ are independent
$\F_p$-valued random variables with
$H_\infty(X),H_\infty(Y)\ge\rho\log_2p$, then
\[
 \left|\Pr\bigl(\operatorname{Ext}_p(X,Y)=1\bigr)-\frac12\right|
 \lesssim_\rho p^{-c_\rho}.
\]
Thus the error is exponentially small in the input length $\log_2p$.
\end{corollary}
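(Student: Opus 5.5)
We would reduce Corollary~\ref{cor:extractor} to an exponential sum estimate in three steps. First, min-entropy sources are convex combinations of flat sources, so we may assume $X$ and $Y$ are uniform on sets $A,B\subseteq\F_p$ with $|A|,|B|\ge p^{\rho}$; we may also shrink them to size about $p^{\rho}$. Second, expand the indicator $\beta_p$ of an interval in additive characters. Its nonzero Fourier coefficients satisfy $|\widehat{\beta_p}(h)|\lesssim 1/|h|$, with $h$ read as an integer in $(-p/2,p/2)$. This gives
\[
 \Bigl|\Pr(\operatorname{Ext}_p(X,Y)=1)-\tfrac12\Bigr|\lesssim \frac1p+\log p\cdot\max_{h\ne0}\frac{1}{|A||B|}\Bigl|\sum_{x\in A,\,y\in B}e_p\bigl(h(xy+x^2y^2)\bigr)\Bigr|.
\]
It therefore suffices to prove the bilinear bound $|\Sigma_h|\le|A||B|p^{-c}$ uniformly in $h\ne0$. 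Scaling $x$ by $h$ does not preserve the form $xy+x^2y^2$, so we keep $h$ inside. We write the phase as $h\,\tilde a_x\cdot\tilde b_y$, with $\tilde a_x=(x,x^2)$ and $\tilde b_y=(y,y^2)$ on the parabola $\Gamma\subset\F_p^2$.

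Third, apply H\"older in $y$ with exponent $2k$ and then extend the sum from $\tilde B$ to all of $\F_p^2$. Orthogonality then gives
\[
 |\Sigma_h|\le |B|^{1-\frac1{2k}}\bigl(p^2\,T_k(\tilde A)\bigr)^{\frac1{2k}},
\]
where $T_k(\tilde A)$ is the number of solutions of $a_1+\dots+a_k=a_{k+1}+\dots+a_{2k}$ with $a_i\in\tilde A$. If necessary, we apply the same step in $x$ as well and take a geometric mean. The argument closes once we have a bound of the shape
\[
 T_k(\tilde A)\lesssim_k |A|^{2k-1-\theta_k}\qquad\text{for }|A|\le p^{\rho},
\]
with $\theta_k$ close to $1$ for large $k$. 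Comparing exponents, the resulting gain is positive when $\rho>1/3$, with $k$ chosen large in terms of $\rho$.

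The main step, and the one we expect to be the obstacle, is this higher-moment bound on the parabola. We will prove it by induction on $k$ using Theorem~\ref{thm:main}. Let $r_{j}(v)$ be the number of representations of $v$ as a sum of $j$ points of $\tilde A$. Then $T_k(\tilde A)$ counts quadruples $(v,v',x,y)$ with $v+\tilde a_x=v'+\tilde a_y$, weighted by $r_{k-1}(v)r_{k-1}(v')$. The key observation is that $v-v'=(y-x,\,(y-x)(y+x))$. So when $x\ne y$, the line through $v$ and $v'$ has slope $x+y$, and the horizontal displacement is $y-x$; together these determine $x$ and $y$. The cases $x=y$ contribute $|A|\,T_{k-1}$. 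The remaining solutions are controlled by incidences between the points of $(k-1)\tilde A$, grouped dyadically by weight $r_{k-1}$, and lines whose slopes lie in the sum set $A+A$. We would bound the number of rich lines through the Szemer\'edi--Trotter-type estimate $|\LL_t|\lesssim N^2t^{-3}+Nt^{-1}$. This estimate follows from Theorem~\ref{thm:main} exactly as \eqref{eq:rich-app} did, valid for $t\gtrsim 1+N/p$.

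All sets involved have size at most a power of $|A|\le p^{\rho}$. Our plan is to arrange that they stay small enough that the term $mn/p$ never dominates, so that the estimate is the sharp real one. The delicate bookkeeping is in choosing the dyadic weights and the Cauchy--Schwarz arrangement so that each iteration gains a fixed power, and in checking that the sizes never cross the threshold where $mn/p$ takes over. We would first try the simplest recursion $T_k\lesssim |A|\,T_{k-1}+\bigl(\text{incidence term}\bigr)$ and verify numerically that the exponents push the admissible rate down to $1/3$.
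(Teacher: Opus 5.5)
\textbf{There is a genuine gap: the one-sided H\"older step in your Step 3 cannot reach rate $1/3$, and your exponent comparison there is wrong.} The reduction to flat sources and the Fourier expansion of $\beta_p$ are fine. Take $|A|=|B|=N=p^\tau$ and $T_k(\tilde A)\lesssim N^{2k-1-\theta_k}$. Your inequality then reads
\[
 \Bigl(\frac{|\Sigma_h|}{N^2}\Bigr)^{2k}\le\frac{p^2\,T_k(\tilde A)}{N^{2k+1}}\lesssim p^2N^{-2-\theta_k},
\]
which saves a power of $p$ only if $\tau>2/(2+\theta_k)$. For $\theta_k$ near $1$ this threshold is $2/3$, not $1/3$. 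Taking the geometric mean with the symmetric bound from H\"older in $x$ gives the same threshold.

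No moment bound can rescue this route. Let $A$ be an arithmetic progression with $N\le\sqrt p$. The $k$-fold sums of integer representatives of $\tilde A$ lie in a box with $O_k(N^3)$ lattice points, so Cauchy--Schwarz gives $T_k(\tilde A)\gtrsim_k N^{2k-3}$. Hence $\theta_k\le2$ up to constants, and the threshold $2/(2+\theta_k)$ is never below $1/2$. The loss comes from extending the $y$-sum to all of $\F_p^2$: Plancherel then costs $p^2$, and only the structure of $\tilde A$ is used.

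\textbf{The missing ingredient is the two-sided amplification} \eqref{eq:moment-amplification}. Apply H\"older in one variable, expand the $k$th power so that $\nu^{*k}$ appears, and apply H\"older again in the new variable. Only then use Cauchy--Schwarz and Plancherel. This gives
\[
 \Bigl|\frac1{N^2}\sum_{a\in A,\,b\in B}e_p\bigl(h(ab+a^2b^2)\bigr)\Bigr|^{k^2}\le p\,\frac{\sqrt{T_k(\tilde A)\,T_k(\tilde B)}}{N^{2k}}.
\]
Here Plancherel costs only $p$ and both sets contribute. The factor $h$ is harmless, since dilating $\tilde A$ by $h$ preserves $T_k$. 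The threshold becomes $\tau>1/(1+\theta_k)$. So you need $\theta_k$ close to $2$, not $1$; that is, $T_k(\tilde A)\lesssim N^{2k-3+o(1)}$.

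Your sketch of the moment bound, via pairs $(v,v')$ on lines with slopes in $A+A$, does not show how such an exponent would arise. The paper's route is as follows.
\begin{itemize}
\item Write $T_k=\sum_{u,v}r_k(u)r_{k-1}(v)1_{\tilde A}(u-v)$.
\item Apply the shear $(u_1,u_2)\mapsto(u_1,u_2-u_1^2)$, which maps $v+\tilde A$ into the line $y=-2v_1x+v_2+v_1^2$.
\item A weighted form of Theorem~\ref{thm:main}, with weights $r_k$ on points and $r_{k-1}$ on lines, gives $T_k\lesssim N^{k-1/2}T_{k-1}^{1/2}+N^{2k-3}+N^{2k-1}/p$.
\item Induction from $T_2\le2N^2$, using $N\le\sqrt p$, yields $T_k\lesssim_kN^{2k-3+2^{2-k}}$.
\end{itemize}
The threshold is then $1/(3-2^{2-k})$, which tends to $1/3$.
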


We follow the Fourier analytic approach of \cite{BourgainExt,LewExtractor}.
To bound the output bias, we will prove cancellation in exponential
sums of $xy+x^2y^2$. We obtain this cancellation from bounds for higher
additive moments, which count pairs of tuples on the parabola with the same sum.
For $A\subseteq\F_p$ and an integer $j\ge2$, let $J_j(A)$ count the
solutions in $A^{2j}$ to
\[
 a_1+\cdots+a_j=a'_1+\cdots+a'_j,\qquad
 a_1^2+\cdots+a_j^2=(a'_1)^2+\cdots+(a'_j)^2.
\]

\begin{lemma}\label{lem:parabola-moments}
If $N=|A|\ge1$ and $j\ge2$, then
\begin{equation}\label{eq:parabola-moments}
 J_j(A)\lesssim_j N^{2j-3+2^{2-j}}+\frac{N^{2j-1}}p.
\end{equation}
\end{lemma}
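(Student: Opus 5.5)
The plan is to induct on $j$, passing from $J_j$ to $J_{j+1}$ with Theorem~\ref{thm:main}. The recursion to aim for is
\[
 J_{j+1}(A)\lesssim_j N^{j+1/2}J_j(A)^{1/2}+\frac{N^{2j+1}}p .
\]
Substituting the inductive bound for $J_j$ reproduces the exponent $2(j+1)-3+2^{1-j}$, since $\tfrac12(2j-3+2^{2-j})+j+\tfrac12=2j-1+2^{1-j}$. The $p$-term also propagates: $N^{j+1/2}(N^{2j-1}/p)^{1/2}\le N^{2j+1}/p^{1/2}$ is not quite enough, so I would instead carry the two regimes $N^{2}\lesssim p$ versus larger separately, or take a maximum inside the square root. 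I expect this to close since $N\le p$.

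\emph{Base case $j=2$.} The target is $J_2\lesssim N^2$. From $a_1+a_2=a_1'+a_2'$ and $a_1^2+a_2^2=a_1'^2+a_2'^2$ we get $a_1a_2=a_1'a_2'$, because $2$ is invertible. Hence $\{a_1,a_2\}=\{a_1',a_2'\}$ as roots of the same quadratic, and $J_2\le 2N^2$.

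\emph{Inductive step.} Let $\gamma(a)=(a,a^2)$, and let $r_j(v)$ be the number of $j$-tuples in $A^j$ with $\sum\gamma(a_i)=v$. Then $\sum_v r_j(v)=N^j$ and $\sum_v r_j(v)^2=J_j$. We have
\[
 J_{j+1}=\#\{(v,v',a,a'): v+\gamma(a)=v'+\gamma(a')\},
\]
weighted by $r_j(v)r_j(v')$.

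Write $v=(s,t)$ and $v'=(s',t')$. Eliminating $a'=a+s-s'$ and expanding, I expect the condition to become linear after the shear $(s,t)\mapsto(s,\,t-s^2)$. Indeed the second coordinate equation reads
\[
 t-s^2=2(a-s')s+\bigl(t'+s'^2-2as'\bigr).
\]
So each $v$ becomes a point $\tilde v=(s,t-s^2)$, and each pair $(v',a)$ becomes a line $\ell_{v',a}$ of slope $2(a-s')$ and intercept $t'+s'^2-2as'$. Distinct pairs $(s',a)$ and intercepts give distinct lines. The constraint $a'\in A$ I would simply drop as an upper bound, at the cost of handling the $a'$ sum by Cauchy--Schwarz; more precisely, I would keep $a'$ free and only use $a\in A$.

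Now $J_{j+1}$ is at most a weighted incidence count $\sum_{\tilde v\in\ell} r_j(v)\,w(\ell)$, where $w(\ell_{v',a})=r_j(v')$. I would split both the point weights and the line weights into dyadic levels $r_j\sim 2^\alpha$ and $r_j\sim 2^\beta$. The number of points at level $2^\alpha$ is $P_\alpha\le\min(N^j2^{-\alpha},J_j2^{-2\alpha})$, and the number of lines at level $2^\beta$ is at most $N\cdot\min(N^j2^{-\beta},J_j2^{-2\beta})$. To each pair of levels I would apply Theorem~\ref{thm:main}. The main term $2^{\alpha+\beta}(P_\alpha L_\beta)^{2/3}$, with the above bounds and the constraint $\sum$, should optimize to $N^{j+1/2}J_j^{1/2}$ up to losses of $O_j(\log N)$ levels. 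Those logarithmic losses must be avoided, so I would use the dyadic sums' geometric decay away from the extremal level. The linear terms are bounded by $N^{j}\cdot N\cdot\max r_j\le N^{2j}$ trivially, which is fine for $j\ge2$. The term $mn/p$ sums to at most $N^{2j+1}/p$, i.e. $(\sum r_j)\cdot N\cdot(\sum r_j)/p$.

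\emph{Main obstacle.} The delicate step is the dyadic optimization. I need to show that the $(mn)^{2/3}$ contribution genuinely gives $N^{j+1/2}J_j^{1/2}$ rather than something weaker, and without a logarithm, since the claimed bound has none. It could also be that the correct recursion uses $J_j$ on both sides more symmetrically, for example via Hölder with exponents $3/2$ and $3$ applied to $\sum r_j^{3/2}$. If the pure level-set count loses, I would first try bounding the weighted incidences by
\[
 I\lesssim \Bigl(\sum_v r_j(v)^2\Bigr)^{1/3}\Bigl(\sum_\ell w(\ell)^2\Bigr)^{1/3}\Bigl(\sum_v r_j(v)\Bigr)^{1/3}\Bigl(\sum_\ell w(\ell)\Bigr)^{1/3}+\cdots,
\]
the standard weighted Szemer\'edi--Trotter form derived from Theorem~\ref{thm:main}. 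This gives $(J_j\cdot NJ_j\cdot N^j\cdot N^{j+1})^{1/3}$, and I would compare that with the target before committing to the recursion.
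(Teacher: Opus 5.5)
\textbf{There is a genuine gap: the incidence step you sketch does not produce the recursion.} Your base case and your target recursion $J_{j+1}\lesssim N^{j+1/2}J_j^{1/2}+N^{2j-1}+N^{2j+1}/p$ are both right.

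\emph{The lines are not distinct.} The line $\ell_{v',a}$ depends only on $w:=v'-\gamma(a)$. Its slope $2(a-s')$ and intercept $t'+s'^2-2as'$ equal $-2w_1$ and $w_2+w_1^2$, so it is the sheared image of the full translate $w+\{(x,x^2):x\in\F_p\}$. The slope records only $a-s'$, so your claim that distinct pairs give distinct lines is false. For example, if $w$ is a sum of $j-1$ points of $\Gamma$, the $N$ pairs $(w+\gamma(a),a)$ with $a\in A$ all give the same line.

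\emph{What correct weights give.} Theorem~\ref{thm:main} and its weighted form \eqref{eq:weighted-ST} apply only to distinct lines. You must therefore aggregate multiplicities into the weight $g(w)=\sum_{a\in A}r_j(w+\gamma(a))$. Then $\sum_w g(w)^2=J_{j+1}$, not $NJ_j$; the diagonal $a=a'$ alone already contributes $NJ_j$. Your line level counts $N\min(N^j2^{-\beta},J_j2^{-2\beta})$ and your fallback main term $(J_j\cdot NJ_j\cdot N^j\cdot N^{j+1})^{1/3}$ are therefore unjustified. The fallback would even give $J_{j+1}\lesssim N^{(2j+2)/3}J_j^{2/3}$, for instance $J_3\lesssim N^{10/3}$ instead of $N^{7/2}$, which is strictly stronger than the lemma. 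With the correct weights you get $J_{j+1}\lesssim N^{(2j+1)/3}(J_jJ_{j+1})^{1/3}+\cdots$. The missing step is to absorb, dividing by $J_{j+1}^{1/3}$.

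\emph{How the paper does it.} The paper reaches this implicit inequality directly from $J_{j+1}=\sum_{u,v}r_{j+1}(u)r_j(v)1_\Gamma(u-v)$. There the lines are indexed by $v$ alone and are automatically distinct, with weight $r_{j+1}$ on points and $r_j$ on lines. Your worry about logarithmic losses is unfounded, because $\int_0^\infty|\{f>t\}|^{2/3}\,dt\lesssim\|f\|_1^{1/3}\|f\|_2^{2/3}$ holds with no logarithm.

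\emph{Linear terms.} These are not fine as you bound them. $N^{2j}$ is the trivial bound for $J_{j+1}$ and exceeds the target $N^{2j-1+2^{1-j}}$. You need $\|r_j\|_\infty\le2N^{j-2}$, which follows from your own base-case observation: fix $j-2$ entries, and since $p$ is odd the sum and sum of squares of the other two determine them up to order. This makes the linear terms $O(N^{2j-1})$.

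\emph{The $p$-term.} This closes by AM--GM, not by $N\le p$: $N^{j+1/2}(N^{2j-1}/p)^{1/2}=N^{2j}/\sqrt p\le\tfrac12\bigl(N^{2j-1}+N^{2j+1}/p\bigr)$. This is why the $N^{2j-1}$ term must be kept in the recursion.
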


\begin{proof}
All convolutions and norms in this proof use counting measure on
$\F_p^2$. Put $\Gamma=\{(a,a^2):a\in A\}$ and
$r_j=1_\Gamma^{*j}$, so $r_j(v)$ counts ordered $j$-tuples of points
of $\Gamma$ with sum $v$. Then
\[
 \|r_j\|_1=N^j,\qquad \|r_j\|_2^2=J_j(A),\qquad
 \|r_j\|_\infty\le2N^{j-2}\quad(j\ge2).
\]
For the last bound, fix $j-2$ entries. The sum and sum of squares of
the remaining two determine their product, since $p$ is odd, and
there are at most two ordered pairs with a given sum and product.
In particular $J_2(A)\le2N^2$.

The functions $r_j$ count tuples with multiplicity, so we need a
weighted form of Theorem~\ref{thm:main}. For
finitely supported nonnegative weights $f$ on points and $g$ on lines,
let $I(f,g)$ be the sum of $f(q)g(\ell)$ over incident pairs. Then
\begin{equation}\label{eq:weighted-ST}
\begin{split}
 I(f,g)\lesssim{}&
 (\|f\|_1\|g\|_1)^{1/3}(\|f\|_2\|g\|_2)^{2/3}\\
 &+\|f\|_1\|g\|_\infty+\|g\|_1\|f\|_\infty
 +p^{-1}\|f\|_1\|g\|_1.
\end{split}
\end{equation}
Indeed, express each weight as the integral of the indicators of its
superlevel sets and apply Theorem~\ref{thm:main}. The main term follows
from
\[
 \int_0^\infty|\{f>t\}|^{2/3}\,dt
 \lesssim\|f\|_1^{1/3}\|f\|_2^{2/3}.
\]
For nonzero $f$, this inequality follows by integrating
$\min\{\|f\|_1/t,\|f\|_2^2/t^2\}^{2/3}$, split at
$t=\|f\|_2^2/\|f\|_1$. The other terms integrate to those displayed
in \eqref{eq:weighted-ST}.

For $j\ge3$, the convolution identity gives
\[
 J_j(A)=\sum_{u,v\in\F_p^2}
 r_j(u)r_{j-1}(v)1_\Gamma(u-v).
\]
Subtracting $u_1^2$ from the second coordinate turns translates of the
parabola into lines. If $u-v\in\Gamma$, the point $(u_1,u_2-u_1^2)$ lies on the line
\[
 y=-2v_1x+v_2+v_1^2.
\]
Both transformations are injective, and dropping the extra requirement
$u_1-v_1\in A$ gives an upper bound. Applying
\eqref{eq:weighted-ST} with weights $r_j$ on points and $r_{j-1}$ on lines,
and writing $J_j=J_j(A)$,
therefore gives
\[
 J_j\lesssim N^{(2j-1)/3}(J_jJ_{j-1})^{1/3}
              +N^{2j-3}+N^{2j-1}/p.
\]
If the last two terms do not already bound $J_j$ up to an absolute
constant, we can absorb them into the left side and divide by
$J_j^{1/3}$. In either case,
\begin{equation}\label{eq:moment-recurrence}
 J_j\lesssim N^{j-1/2}J_{j-1}^{1/2}
              +N^{2j-3}+N^{2j-1}/p.
\end{equation}
Starting with $J_2\le2N^2$, induction proves
\eqref{eq:parabola-moments}. The first term in the bound for $J_{j-1}$
gives $N^{2j-3+2^{2-j}}$. The term involving $p$ gives
$N^{2j-2}/\sqrt p$, which is at most
$\tfrac12N^{2j-3}+\tfrac12N^{2j-1}/p$.
\end{proof}

We next use the moment bounds to estimate exponential sums. We state
the following inequality in $\F_p^d$ for use in
Remark~\ref{rem:paraboloid-extractor}. If
$d\ge1$, $\mu,\nu$ are probability measures on $\F_p^d$, and $j\ge1$
is an integer, then
\begin{equation}\label{eq:moment-amplification}
 \left|\sum_{v,w}\mu(v)\nu(w)e_p(v\cdot w)\right|^{j^2}
 \le p^{d/2}\|\mu^{*j}\|_2\|\nu^{*j}\|_2.
\end{equation}
Here $\mu^{*j}$ is the distribution of the sum of $j$ independent
random variables, each with distribution $\mu$, and similarly for $\nu$.
To prove this, write $T$ for the sum on the left and put
$F(v)=\sum_w\nu(w)e_p(v\cdot w)$. Jensen's inequality gives
$|T|^j\le\sum_v\mu(v)|F(v)|^j$. Choose complex numbers
$\theta(v)$ of modulus one such that
$|F(v)|^j=\theta(v)F(v)^j$, choosing $\theta(v)=1$ when $F(v)=0$.
Set $G(u)=\sum_v\mu(v)\theta(v)e_p(v\cdot u)$. Expanding gives
\[
 |T|^j\le\left|\sum_u\nu^{*j}(u)G(u)\right|
 \le\sum_u\nu^{*j}(u)|G(u)|.
\]
Since $\nu^{*j}$ is a probability measure, a second application of
Jensen gives
$|T|^{j^2}\le\sum_u\nu^{*j}(u)|G(u)|^j$.
Choose complex numbers $\zeta(u)$ of modulus one with
$|G(u)|^j=\zeta(u)G(u)^j$. Expanding $G(u)^j$ and applying
Cauchy--Schwarz and Plancherel gives
\begin{align*}
 |T|^{j^2}
 &\le\left|\sum_{u,v}\zeta(u)\nu^{*j}(u)(\mu\theta)^{*j}(v)e_p(v\cdot u)\right|\\
 &\le p^{d/2}\|\nu^{*j}\|_2\|(\mu\theta)^{*j}\|_2
 \le p^{d/2}\|\nu^{*j}\|_2\|\mu^{*j}\|_2,
\end{align*}
which proves \eqref{eq:moment-amplification}.

\begin{proof}[Proof of Corollary~\ref{cor:extractor}]
Choose $1/3<\tau<\min\{\rho,1/2\}$ and put
$N=\lfloor p^\tau\rfloor$. A probability distribution with all point
masses at most $1/N$ is a convex combination of uniform distributions
on $N$-element sets. To see this, if two entries lie strictly between
$0$ and $1/N$, move mass between them in both directions until one of
the entries equals $0$ or $1/N$. The original vector is a convex
combination of the two resulting vectors. A probability vector cannot have exactly one such entry,
so iteration ends at vectors with $N$ entries equal to $1/N$.
The entropy hypotheses and independence therefore reduce the proof to
uniform sources on arbitrary $N$-element sets $A,B\subseteq\F_p$. The
output probability is bilinear in the two source distributions, so
the absolute bias is bounded by the average of the absolute biases
for the uniform sources.

Put $\gamma_j=3-2^{2-j}$. Since $\gamma_j\to3$ and $\tau>1/3$, we
may choose a fixed $j\ge2$ such that $\tau\gamma_j>1$.
Since $N\le\sqrt p$,
Lemma~\ref{lem:parabola-moments} gives
$J_j(A),J_j(B)\lesssim_j N^{2j-\gamma_j}$.
Apply \eqref{eq:moment-amplification} with $d=2$ to the uniform
probability measures on $\{\lambda(a,a^2):a\in A\}$ and
$\{(b,b^2):b\in B\}$, for any $\lambda\ne0$. Multiplying every point
by $\lambda$ preserves the convolution norms, so
\begin{equation}\label{eq:scalar-character}
 \left|\frac1{N^2}\sum_{a\in A,\,b\in B}
 e_p\bigl(\lambda(ab+a^2b^2)\bigr)\right|^{j^2}
 \le p\frac{\sqrt{J_j(A)J_j(B)}}{N^{2j}}
 \lesssim_j pN^{-\gamma_j}.
\end{equation}
Thus the normalized exponential sum in \eqref{eq:scalar-character} is
$O_\rho(p^{-\eta})$ for every $\lambda\ne0$, where
$\eta=(\tau\gamma_j-1)/j^2>0$.

Write the normalized Fourier expansion as
\[
 \beta_p(t)=\sum_{\lambda\in\F_p}
 b_\lambda e_p(\lambda t),\qquad
 b_\lambda=p^{-1}\sum_{t\in\F_p}\beta_p(t)e_p(-\lambda t).
\]
Then $b_0=1/2+1/(2p)$. For $\lambda\ne0$, summing a geometric
progression gives $|b_\lambda|\lesssim1/|\lambda|_p$, where
$|\lambda|_p$ is the absolute value of the representative in
$[-(p-1)/2,(p-1)/2]$. Thus
$\sum_{\lambda\ne0}|b_\lambda|\lesssim\log(2p)$.
Combining this with \eqref{eq:scalar-character} bounds the output bias
by $O_\rho(p^{-\eta}\log(2p)+p^{-1})$. The corollary follows by taking
$0<c_\rho<\eta$.
\end{proof}

\begin{remark}\label{rem:paraboloid-extractor}
Assume in this remark that $-1$ is not a square in $\F_p$. The same
argument applies to the paraboloid variant of the construction
considered in \cite{LewExtractor,LewRect}, in which
\[
 \operatorname{Ext}^{(2)}_p(x,y):=\beta_p\bigl(x\cdot y+(x\cdot x)(y\cdot y)\bigr),
 \qquad x,y\in\F_p^2,
\]
so the quantity inside $\beta_p$ is the dot product of $(x,x\cdot x)$ and $(y,y\cdot y)$ in
$\F_p^3$ and the inputs have length $2\log_2p$. It was observed in
\cite{LewRect} that a bound $E(\widetilde A)\lesssim_\varepsilon
N^{2+\varepsilon}$ for small $A\subseteq\F_p^2$ would give this
extractor min-entropy rate near $3/8=0.375$. Proposition~\ref{prop:energy}
gives this bound, and we include the proof. Let $\rho>3/8$ and let
$X,Y$ be independent $\F_p^2$-valued random variables with
$H_\infty(X),H_\infty(Y)\ge2\rho\log_2p$. Choose $3/8<\tau<\min\{\rho,1/2\}$
and put $N=\lfloor p^{2\tau}\rfloor\le p$. As in the proof of
Corollary~\ref{cor:extractor}, it suffices to treat uniform sources on
$N$-element sets $A,B\subseteq\F_p^2$. Apply
\eqref{eq:moment-amplification} with $d=3$ and $j=2$ to the uniform
measures on $\lambda\widetilde A$ and $\widetilde B$, where
$\lambda\ne0$. The uniform measure $\mu$ on an $N$-element set $U$
has $\|\mu^{*2}\|_2^2=N^{-4}E(U)$, and dilation preserves energy, so
Proposition~\ref{prop:energy} and $N\le p$ give
\[
 \left|\frac1{N^2}\sum_{a\in A,\,b\in B}
 e_p\bigl(\lambda(a\cdot b+(a\cdot a)(b\cdot b))\bigr)\right|^4
 \le p^{3/2}\frac{\sqrt{E(\widetilde A)E(\widetilde B)}}{N^4}
 \lesssim\frac{p^{3/2}\log(2N)}{N^2}
 \lesssim p^{3/2-4\tau}\log(2p).
\]
Since $\tau>3/8$, the Fourier expansion of $\beta_p$ gives exponentially
small error, as before. The rate near $1/3$ in
Corollary~\ref{cor:extractor} comes from iterating the moment estimate in
Lemma~\ref{lem:parabola-moments}. This uses the fact that
$u-v\in\Gamma$ can be written as a point-line incidence in $\F_p^2$.
For the paraboloid the corresponding condition gives point-plane
incidences in $\F_p^3$, so Theorem~\ref{thm:main} does not give the
same recurrence.
\end{remark}

\subsection{Sum-product estimates}

Here $k$ is again an arbitrary field, with $p$ as in the introduction.
We give two sum-product estimates. The argument below gives exponent
$52/41-\varepsilon$ for $|A|\le p^{41/71}$ ($41/71\approx0.5775$). The decomposition argument
in the next subsection gives the larger exponent $49/38-\varepsilon$
in the smaller range $|A|\le p^{1/3}$. In characteristic zero there is
no size restriction, so the second estimate is stronger in that case.
Both arguments use a theorem of Bloom \cite{Bloom} which bounds $|A+A|$
from estimates for the number of representations of a sum $a+b$, with $a\in A$ and $b$
in an auxiliary set $B$.

\begin{corollary}\label{cor:sumproduct}
For every $\varepsilon>0$ and every finite nonempty $A\subset k$ with
$|A|\le p^{41/71}$,
\[
 \max\{|A+A|,|AA|\}\gtrsim_\varepsilon|A|^{52/41-\varepsilon}.
\]
In characteristic zero there is no restriction on $|A|$.
\end{corollary}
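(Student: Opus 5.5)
We follow the Elekes/Rudnev-style route through multiplicative energy, combined with Bloom's theorem as the paper indicates. Bloom's theorem bounds $|A+A|$ from above-average representation counts $r_{A+B}$. The mechanism is to feed it a weighted incidence bound, namely \eqref{eq:weighted-ST} with Theorem~\ref{thm:main} replacing Szemer\'edi--Trotter.

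\emph{Step 1: the incidence input.} For sets $A,B,C\subset k$, the points $(b,c)$ on the line $y=\lambda(x+a)$ encode solutions of $c=\lambda(b+a)$. Point sets of the form $(A+A)\times AA$ and line families $\{y=a'(x-a)\}$ give the classical Elekes configuration: $|A|^2$ lines, each containing about $|A|$ points of a grid of size $|A+A||AA|$. Theorem~\ref{thm:main} alone gives only $5/4$. To go beyond that, we will use the incidence theorem to bound the number $T$ of collinear triples, or equivalently the third moment of rich lines. Since rich-line counts $|\LL_t|\lesssim N^2t^{-3}+Nt^{-1}$ hold for $t\gtrsim 1+N/p$, exactly as in \eqref{eq:rich-app}, this lets us control, for a popular set $B$ of products or ratios, the quantity $\sum_x r_{A+B}(x)^2$ and higher moments.

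\emph{Step 2: the Shkredov-type energy argument.} Let $E^\times(A)$ be the multiplicative energy, and write $|A|=N$. Dyadically pigeonhole a popular ratio set $D$ and the corresponding sets $A\cap \lambda A$. Bound $E^+$ of these intersections, or the third energy $E_3^+$, via collinear triples in $A\times A$ lines; with Theorem~\ref{thm:main} this gives $E_3^+(A)\lesssim N^3\log N$ provided $N$ is small relative to $p$. Then feed the resulting bound on $\sum_x r_{A-B}(x)^{k}$ into Bloom's theorem to obtain a lower bound on $|A+A|$ in terms of $|AA|$. Optimizing the dyadic parameters against $E^\times(A)\ge N^4/|AA|$ should produce the exponent $52/41$, with $\varepsilon$ absorbing the logarithms from pigeonholing.

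\emph{Step 3: the size restriction.} Every application of Theorem~\ref{thm:main} must have the $mn/p$ term dominated. We will track, in each incidence application, that $mn/p\lesssim (mn)^{2/3}+m+n$, i.e.\ $mn\lesssim p^3$ or $\min(m,n)\lesssim p$. The worst configuration has point sets of size about $N\cdot|A+A|$ and line sets of size about $N^2$ (after bounding $|A+A|$ by a power of $N$ in the regime where the conclusion fails). Solving $N^{a}\le p^3$ at the extremal parameters should give exactly $N\le p^{41/71}$; this is a check that the threshold matches. In characteristic zero there is no such term and no restriction.

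The main obstacle is Step 2: identifying precisely which form of Bloom's theorem and which energy moment to combine so that the numerology yields $52/41$ rather than a weaker exponent. We would first try the scheme of Mohammadi and Stevens with their incidence input replaced by Theorem~\ref{thm:main}, then re-optimize.
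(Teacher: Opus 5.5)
There is a genuine gap, and it is where you place the main obstacle: Step~2 never produces the inequality that Bloom's theorem needs. That theorem takes as input the additive control $\kappa(A)$. This is the least $\kappa$ with $\sum_s r_{A+B}(s)^3\le\kappa N^2|B|^2$ for \emph{every} finite nonempty $B\subset k$, and the theorem returns $|A+A|\gtrsim_\delta N\kappa(A)^{-11/19+\delta}$.

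Your plan has three problems here:
\begin{itemize}
\item Bounding second moments, or moments for one popular set $B$ of products or ratios, does not control this supremum over all $B$.
\item The intermediate claim $E_3^+(A)\lesssim N^3\log N$ is false for general $A$: an arithmetic progression has $E_3^+(A)\asymp N^4$. Any bound of this type must involve $|AA|$.
\item ``Should produce $52/41$'' is not a derivation. Write $\Delta:=\max\{|A+A|,|AA|\}/N$ and $\alpha:=11/19-\delta$. The exponent $52/41=1+\lim_{\delta\to0}\alpha/(1+2\alpha)$ comes from combining $\Delta\gtrsim_\delta\kappa(A)^{-\alpha}$ with a bound $\kappa(A)\lesssim\Delta^2/N$ up to logarithms. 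Nothing in your multiplicative-energy and collinear-triples scheme yields that bound.
\end{itemize}

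The missing idea is a direct incidence bound for $\kappa(A)$ that is uniform in $B$. After discarding $0$ from $A$, fix $B$ with $|B|=M$ and a set $X\subset k$. Take the $N|X|$ points $A^{-1}\times X$ and the at most $\Delta NM$ lines $y=ax+b$ with $a\in AA$ and $b\in B$. Each triple $(c,s,b)\in A\times X\times B$ with $s-b\in A$ gives a distinct incidence $(c^{-1},s)\in\{y=c(s-b)x+b\}$. So Theorem~\ref{thm:main} bounds $N\sum_{s\in X}r_{A+B}(s)$.

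Taking $X$ to be a level set and absorbing the linear terms gives
$|\{s:r_{A+B}(s)\ge t\}|\lesssim\Delta^2NM^2t^{-3}+\Delta Mt^{-1}$ for $t\gtrsim1+\Delta NM/p$. Dyadic summation then gives $\kappa(A)\lesssim\log(2N)(\Delta^2/N+\Delta N/p)$. When $M>p/\Delta$, use instead the trivial bound $N/M<\Delta N/p$ for the normalized third moment.

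This also replaces your Step~3. The $mn/p$ term is not made negligible in each application. It survives as the term $\Delta N/p$ and gives a second branch $\Delta\gtrsim(p/(N\log(2N)))^{\alpha/(1+\alpha)}$, whose exponent tends to $11/30$. The hypothesis $N\le p^{41/71}$ is exactly the condition $(p/N)^{11/30}\ge N^{11/41}$, under which this branch is no worse than the main one. Your proposed derivation of the threshold from an Elekes-type configuration with $mn\le p^3$ is not tied to any step of the argument.
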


\begin{proof}
The assertion is trivial for a singleton, and deleting $0$ from a larger
set loses at most half its elements, so we may assume $0\notin A$. Put
\[
 N:=|A|,\qquad\Delta:=\frac{\max\{|A+A|,|AA|\}}N\ge1.
\]
For a finite nonempty $B\subset k$ write
$A+B:=\{a+b:a\in A,\ b\in B\}$ and let
$r_{A+B}(s):=|\{(a,b)\in A\times B:a+b=s\}|$ count representations of
$s$ as a sum from $A$ and $B$. Let $\kappa(A)$ be the least number such that
\begin{equation}\label{eq:control-definition}
 \sum_{s\in k}r_{A+B}(s)^3\le\kappa(A)N^2|B|^2
 \quad\text{for every finite nonempty }B\subset k.
\end{equation}
We call $\kappa(A)$ the additive control of $A$.
The bounds $r_{A+B}\le\min(N,|B|)$ and $\sum_sr_{A+B}(s)=N|B|$ give
$\kappa(A)\le1$, and $B=\{0\}$ gives $\kappa(A)\ge1/N$. Bloom's theorem
\cite[Theorem~1]{Bloom}, which applies in any abelian group and in
particular in the additive group of $k$, states that for every
$\delta>0$
\begin{equation}\label{eq:bloom}
 |A+A|\gtrsim_\delta N\kappa(A)^{-11/19+\delta}.
\end{equation}
As Bloom notes, this sumset bound is essentially due to Rudnev and
Stevens \cite{RudStev}. We will show that a small product set forces
$\kappa(A)$ to be small. The bound \eqref{eq:bloom} then forces
$A+A$ to be large. More precisely, we will prove
\begin{equation}\label{eq:control-bound}
 \kappa(A)\lesssim\log(2N)\Bigl(\frac{\Delta^2}N+\frac{\Delta N}p\Bigr).
\end{equation}
To prove \eqref{eq:control-bound}, fix $B$, put $M:=|B|$, and let
$X\subset k$ be finite. Apply
Theorem~\ref{thm:main} to the points $A^{-1}\times X$, where
$A^{-1}:=\{a^{-1}:a\in A\}$, and the distinct lines
$y=ax+b$ with $a\in AA$, $b\in B$. For $s\in X$ and $b\in B$ with
$s-b\in A$, each $c\in A$ gives the incidence
$(c^{-1},s)\in\{y=c(s-b)x+b\}$, and the point and line together determine
$c,s,b$, so these incidences are distinct. Since $|AA|\le\Delta N$,
dividing by $N$ gives
\begin{equation}\label{eq:sum-level}
 \sum_{s\in X}r_{A+B}(s)
 \lesssim\Delta^{2/3}N^{1/3}(|X|M)^{2/3}+|X|+\Delta M+\frac{\Delta NM}p|X|.
\end{equation}
Let $X_t:=\{s:r_{A+B}(s)\ge t\}$. For $t\ge t_0:=C(1+\Delta NM/p)$, with
$C$ a sufficiently large absolute constant, the terms proportional to
$|X_t|$ can be absorbed into the left side, giving
\begin{equation}\label{eq:sum-tail}
 |X_t|\lesssim\Delta^2NM^2t^{-3}+\Delta Mt^{-1}.
\end{equation}

If $p<\infty$ and $M>p/\Delta$, the bounds $r_{A+B}\le N$ and
$\sum_sr_{A+B}(s)=NM$ give
\[
 \frac{\sum_sr_{A+B}(s)^3}{N^2M^2}\le\frac NM<\frac{\Delta N}p.
\]
We may therefore assume $M\le p/\Delta$, with $p=\infty$ in characteristic zero.
Terms with $r_{A+B}<t_0$ contribute at most $t_0^2NM$ to the third
moment. A dyadic range $t\le r_{A+B}<2t$ contributes
$O(t^3|X_t|)$. Summing \eqref{eq:sum-tail} over dyadic $t$ from $t_0$
up to $\min(N,M)$ therefore bounds the remaining terms by
\[
 O\bigl(\Delta^2NM^2\log(2N)+\Delta M\min(N,M)^2\bigr).
\]
After division by $N^2M^2$ the contribution from $r_{A+B}<t_0$ is
\[
 O\Bigl(\frac1{NM}+\frac{\Delta^2NM}{p^2}\Bigr)\lesssim\frac1N+\frac{\Delta N}p,
\]
using $\Delta M\le p$, and the contribution from $r_{A+B}\ge t_0$ is
$O(\Delta^2N^{-1}\log(2N)+\Delta/N)$, using $\min(N,M)^2\le NM$. Since
$\Delta\ge1$, this proves \eqref{eq:control-bound} uniformly in $B$.

Choose $0<\delta<11/19$ and put $\alpha:=11/19-\delta$. Combining
\eqref{eq:bloom} and \eqref{eq:control-bound} according to which term of
\eqref{eq:control-bound} is larger gives
\[
 \Delta\gtrsim_\delta\min\Bigl\{
 \Bigl(\frac N{\log(2N)}\Bigr)^{\alpha/(1+2\alpha)},
 \Bigl(\frac p{N\log(2N)}\Bigr)^{\alpha/(1+\alpha)}\Bigr\},
\]
the second term being omitted in characteristic zero. As $\delta\to0$
the exponents tend to $11/41$ and $11/30$ respectively, and the size
hypothesis gives $p/N\ge N^{30/41}$. Choosing $\delta$ small in terms of
$\varepsilon$ and absorbing the logarithms yields
$\Delta\gtrsim_\varepsilon N^{11/41-\varepsilon}$, as required.
\end{proof}

\subsection{Shakan's decomposition and a second sum-product bound}\label{sec:decomposition}

Shakan \cite[Theorem~1.10]{Shakan} showed that every finite
$A\subset\mathbb R$ contains two large subsets such that the additive
control of one and the multiplicative control of the other have
product $O(|A|^{-1})$, up to logarithms. The subsets may overlap.
Bloom \cite[Theorems~11 and~13]{Bloom} gave a proof using the
Szemer\'edi--Trotter bound for translates of a convex graph, and
combined the decomposition with \eqref{eq:bloom} to obtain exponent
$49/38-\varepsilon$ \cite[Theorems~3 and~14]{Bloom}. In the
multiplicative setting the corresponding incidences are between points
and lines, so we can use Theorem~\ref{thm:main}. The additional $mn/p$
term leads to the restriction $|A|^3\le p$.

For a finite nonempty $U\subset k$ let $\kappa^+(U)$ be the additive
control defined by \eqref{eq:control-definition} with $U$ in place of
$A$. Write $k^*:=k\setminus\{0\}$. For finite nonempty
$U,B\subset k^*$ let $r_{UB}(s):=|\{(u,b)\in U\times B:ub=s\}|$, and
let $\kappa^\times(U)$ be the least number such that
$\sum_sr_{UB}(s)^3\le\kappa^\times(U)|U|^2|B|^2$ for every finite
nonempty $B\subset k^*$. Applying \eqref{eq:bloom} in the multiplicative
group $k^*$ shows that small multiplicative control forces a large
product set.

We record three facts about control in an
abelian group. Write $\kappa$ for either $\kappa^+$ or $\kappa^\times$.
We use additive notation for both groups.
First, $1/|U|\le\kappa(U)\le1$.
Second, if $U\subseteq U'$ then $\kappa(U)|U|^2\le\kappa(U')|U'|^2$,
since $r_{U+B}\le r_{U'+B}$ pointwise.
Third, if $U_1,\ldots,U_t$ are disjoint then
$\kappa(U_1\cup\cdots\cup U_t)\le\sum_i\kappa(U_i)$ \cite[Lemma~3]{Bloom}.
Indeed, for every $B$ the triangle inequality in $\ell^3$ and H\"older's
inequality give
\[
 \|r_{(\bigcup_iU_i)+B}\|_3\le\sum_i\|r_{U_i+B}\|_3
 \le\sum_i\kappa(U_i)^{1/3}|U_i|^{2/3}|B|^{2/3}
 \le\Bigl(\sum_i\kappa(U_i)\Bigr)^{1/3}\Bigl(\sum_i|U_i|\Bigr)^{2/3}|B|^{2/3}.
\]
We also use the trivial bound
$\sum_sr_{U+B}(s)^3\le|U|^3|B|$, valid because $r_{U+B}\le|U|$ and
$\sum_sr_{U+B}(s)=|U||B|$. Consequently, to prove $\kappa(U)\le\kappa_0$,
it suffices to consider sets $B$ with $|B|<|U|/\kappa_0$.

\begin{proposition}\label{prop:decomposition}
Let $A\subset k^*$ be finite with $N:=|A|\ge1$ and $N^3\le p$. Then
there are $X,Y\subseteq A$ with $|X|,|Y|\ge N/2$ and
\[
 \kappa^+(Y)\,\kappa^\times(X)\lesssim\frac{(\log4N)^5}N.
\]
\end{proposition}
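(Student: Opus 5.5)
We follow Bloom's proof of Shakan's decomposition \cite[Theorems~11 and~13]{Bloom}. The one change is that the Szemer\'edi--Trotter input is replaced by Theorem~\ref{thm:main}. The core is an incidence lemma coupling multiplicative and additive popularity.

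\emph{The incidence lemma.} Let $U,V\subseteq A$, let $B\subset k^*$ and $C\subset k$ be finite, and fix thresholds $\tau_1,\tau_2\ge1$. Put $S:=\{s:r_{UB}(s)\ge\tau_1\}$ and $T:=\{t:r_{V+C}(t)\ge\tau_2\}$. We count the elements $a\in A$ that are simultaneously involved in popular products and popular sums.

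The key observation is that the relations $ab=s$ and $a+c=t$ are equivalent to the incidence $(s,t)\in\{y=b^{-1}x+c\}$. So we apply Theorem~\ref{thm:main} to the point set $S\times T$ and the $|B||C|$ distinct lines $y=b^{-1}x+c$. The incidences between them number at least the sum over $a$ of the number of pairs $(b,c)$ with $ab\in S$ and $a+c\in T$.

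Together with $|S|\le |U||B|/\tau_1$ and $|T|\le|V||C|/\tau_2$, this gives an upper bound for
\[
 \sum_{a\in A}|\{b:ab\in S\}|\,|\{c:a+c\in T\}|.
\]
The bound has the form $(|S||T||B||C|)^{2/3}$ plus linear terms plus $|S||T||B||C|/p$.

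\emph{Greedy decomposition.} With the lemma in hand, we run Bloom's iteration. Suppose $\kappa^\times(X)$ is large for a candidate $X$. By the last remark before the proposition, this is witnessed by some $B$ with $|B|<|X|/\kappa^\times(X)\le N^2$. A dyadic pigeonhole over the values of $r_{XB}$ gives a level $\tau_1$ carrying a $1/\log(4N)$ fraction of the third moment. Similarly, a large $\kappa^+$ is witnessed by some $C$ with $|C|\le N^2$ and a level $\tau_2$.

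The lemma then shows that elements heavy for both cannot be too numerous. Concretely, we split $A$ into pieces that are heavy only multiplicatively and pieces that are heavy only additively, and remove them greedily. At each stage we retain a set of size at least $N/2$. We then recombine the pieces using the subadditivity $\kappa(U_1\cup\dots\cup U_t)\le\sum_i\kappa(U_i)$ \cite[Lemma~3]{Bloom} and the monotonicity fact $\kappa(U)|U|^2\le\kappa(U')|U'|^2$. Each of the two dyadic pigeonholes, the dyadic split of elements by their popularity degrees, and the number of greedy rounds (at most $O(\log N)$, since the mass halves) costs one factor of $\log(4N)$. This accounts for $(\log 4N)^5$. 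The final balancing of the main term $(|S||T||B||C|)^{2/3}$ against the lower bound is exactly Bloom's computation, and it yields $\kappa^+(Y)\kappa^\times(X)\lesssim(\log 4N)^5/N$.

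\emph{Main obstacle: the $mn/p$ term.} The step needing the most care is showing that the term $mn/p$ from Theorem~\ref{thm:main} never dominates. Here $m=|S||T|\le N^2|B||C|$ and $n=|B||C|$. This is where $N^3\le p$ enters, combined with the witness-size reductions $|B|,|C|\lesssim N/\kappa\le N^2$. I would first check that
\[
 \frac{mn}{p}\le (mn)^{2/3}
 \quad\Longleftrightarrow\quad mn\le p^3
\]
in the relevant regime. If that is too lossy, I would instead compare $mn/p$ directly with the required lower bound on incidences. That bound is at least $\tau_1\tau_2|S||T|$-type, and $\tau_1\tau_2\ge|S||T|/p$-like conditions follow from $N^3\le p$ together with the trivial bounds $\tau_i\le N$. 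The linear terms $m+n$ are handled as in Bloom by restricting to $\tau_1,\tau_2$ above a constant threshold, with small thresholds absorbed by the trivial control bound $\kappa\le1$.
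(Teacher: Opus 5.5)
There is a genuine gap. Your incidence is the right one: after swapping coordinates, the line $y=b^{-1}x+c$ through (product, sum) is exactly the paper's line $\ell_{b,b_0}:y=b(u-b_0)$ through (sum, product), with $c=b_0$. The problem is how you use it.

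Control is a supremum over \emph{all} test sets. So fixing a multiplicative witness $B$ and an additive witness $C$, showing that few elements are heavy for both, and greedily removing heavy pieces gives no bound on $\kappa^\times$ or $\kappa^+$ of the resulting sets. A piece that is light for $B$ can still have large control through another test set.

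The missing idea is to fix only the additive data and let the multiplicative test set be arbitrary. Take a near-extremal witness $B_0$ for $\kappa:=\kappa^+(T)$. Take a dyadic level set $S$ on which $r_{T+B_0}\approx\delta|T|$. Let $T'=\{x\in T:\eta|S|\le\rho(x)<2\eta|S|\}$, where $\rho(x)$ counts representations $x=s-b_0$ with $(s,b_0)\in S\times B_0$. Then, for \emph{every} $B\subset k^*$, apply the incidence bound to $S\times C_i$, where the $C_i$ are the dyadic level sets of $r_{T'B}$, and use H\"older over the $O(L)$ levels. This gives $\kappa^\times(T')\lesssim L^5|T'|/(\kappa|T|^2)$.

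The linear terms are not removed by taking thresholds above a constant. The term $|S||C_i|$ produces $1/(\eta|T'|)$ after normalization, and this is controlled only through the count $\sum_x\rho(x)^2\le|S||B_0|^2$, which yields $\eta\kappa\lesssim L^2\delta^2$.

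The iteration then takes $X=\bigcup_jT'_j$ and $Y=A\setminus X_{i-1}$. Subadditivity over disjoint unions and monotonicity make the number of rounds cost nothing. So your logarithm for ``$O(\log N)$ rounds'' does not match a working argument. In the paper, the $L^5$ is $L$ from the pigeonhole on sums, $L^3$ from cubing the pigeonhole on $\rho$, and $L$ from H\"older over the levels $C_i$.

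Your treatment of the $mn/p$ term also fails as written. From $|B|,|C|\le N^2$ you get $mn\le N^{10}$, while $N^3\le p$ gives only $p^3\ge N^9$. So the check $mn\le p^3$ is not available, and your alternative comparison is not substantiated.

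After normalization the $p$-term is $|B_0||B|/(\eta p|T'|)$, so what is needed is $|B||B_0|\lesssim p$. Two facts combine to give this:
\begin{itemize}
\item $|B_0|\le2|T|/\kappa$, which follows from the near-extremality of $B_0$ together with $r_{T+B_0}\le|T|$.
\item One may assume $|B|<\kappa|T|^2$, since larger $B$ are handled by the trivial bound $\sum_cr_{T'B}(c)^3\le|T'|^3|B|$.
\end{itemize}
The factors of $\kappa$ cancel, giving $|B||B_0|<2|T|^3\le2p$. The $p$-term is then at most twice the term $1/(\eta|T'|)$. Separate size bounds on the two test sets lose this cancellation.
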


\begin{proof}
The main step is the following claim. Let $T\subset k^*$ be finite and
nonempty with $|T|^3\le p$, and put $\kappa:=\kappa^+(T)$ and
$L:=\log(4|T|)$. Then there is $T'\subseteq T$ with
$|T'|\gtrsim\kappa|T|/L^3$ and
\begin{equation}\label{eq:claim}
 \kappa\,\kappa^\times(T')\lesssim L^5\frac{|T'|}{|T|^2}.
\end{equation}
We prove the claim in four steps. We first choose a set of sums with
comparable numbers of representations, then select the elements of
$T$ that occur frequently in these representations. We use incidences
to bound the multiplicative control of the resulting set.

\smallskip
\noindent\emph{Step 1.}
By the definition of $\kappa$, there is a finite nonempty $B_0\subset k$ with
$\sum_sr(s)^3\ge\frac12\kappa|T|^2|B_0|^2$, where $r:=r_{T+B_0}$.
The bounds $\sum_sr(s)^3\le|B_0|^2\sum_sr(s)=|T||B_0|^3$ and
$\sum_sr(s)^3\le|T|^3|B_0|$ give
\begin{equation}\label{eq:B0-size}
 \kappa|T|\le2|B_0|,\qquad|B_0|\le2|T|/\kappa.
\end{equation}
The elements with $r(s)<\kappa|T|/4$ contribute at most
$(\kappa|T|/4)^2\sum_sr(s)=\kappa^2|T|^3|B_0|/16\le\kappa|T|^2|B_0|^2/8$
to $\sum_sr(s)^3$. The remaining elements satisfy
$\kappa|T|/4\le r(s)\le|T|$ and contribute at least
$\frac38\kappa|T|^2|B_0|^2$. Since $\kappa\ge1/|T|$ they lie in $O(L)$
dyadic ranges $\delta|T|\le r(s)<2\delta|T|$ with $\delta=2^{-i}$, so
there is $\delta\ge\kappa/8$ for which
$S:=\{s:\delta|T|\le r(s)<2\delta|T|\}$ satisfies
$8\delta^3|T|^3|S|\gtrsim\kappa|T|^2|B_0|^2/L$, that is,
\begin{equation}\label{eq:S-size}
 |S|\gtrsim\frac{\kappa|B_0|^2}{L\delta^3|T|}.
\end{equation}
Every $s\in S$ has at least $\delta|T|$ representations $s=t+b$ with
distinct $b\in B_0$, so $|B_0|\ge\delta|T|$.

\smallskip
\noindent\emph{Step 2.}
For $x\in k$ let $\rho(x):=|\{(s,b)\in S\times B_0:s-b=x\}|$, so
$\rho(x)\le|S|$. Then
$\sum_{x\in T}\rho(x)=\sum_{s\in S}r(s)\ge\delta|T||S|$. The elements
$x\in T$ with $\rho(x)<\delta|S|/4$ contribute at most
$\delta|T||S|/4$, and the rest lie in $O(L)$ dyadic ranges, since
$\delta\ge1/(8|T|)$. Thus there is $\delta/8\le\eta\le1$ for which
$T':=\{x\in T:\eta|S|\le\rho(x)<2\eta|S|\}$ satisfies
\begin{equation}\label{eq:Tprime-size}
 |T'|\gtrsim\frac{\delta|T|}{L\eta}.
\end{equation}

\smallskip
\noindent\emph{Step 3.}
To obtain the required lower bound for $|T'|$, we bound $\eta$ in
terms of $\delta$ and $\kappa$.
The sum $\sum_{x\in k}\rho(x)^2$ counts quadruples
$(s,b,s',b')\in S\times B_0\times S\times B_0$ with $s-b=s'-b'$.
Since $s,b,b'$ determine $s'$, the number of such quadruples is at most
$|S||B_0|^2$. Hence
$\eta^2|S|^2|T'|\le|S||B_0|^2$. Inserting \eqref{eq:Tprime-size} and
then \eqref{eq:S-size} gives
\begin{equation}\label{eq:eta-kappa}
 \eta\kappa\lesssim L^2\delta^2.
\end{equation}
In particular $|T'|\gtrsim\delta|T|/(L\eta)\gtrsim\kappa|T|/(L^3\delta)
\ge\kappa|T|/L^3$, as claimed.

\smallskip
\noindent\emph{Step 4.}
We now estimate $\kappa^\times(T')$ using Theorem~\ref{thm:main}.
Let $B\subset k^*$ be finite and nonempty and write
$r_\times:=r_{T'B}$. For $x\in T'$, $b\in B$, and a representation
$x=s-b_0$ with $(s,b_0)\in S\times B_0$, the point $(s,xb)$ lies on
the line $\ell_{b,b_0}:y=b(u-b_0)$, in coordinates $(u,y)$, since
$b(s-b_0)=bx$. The lines $\ell_{b,b_0}$ are distinct, and the point
and the line together determine $s,b,b_0$ and hence $x$. Thus for every
finite $C\subset k$, each pair $(x,b)$ with $xb\in C$ contributes at
least $\eta|S|$ distinct incidences between the points $S\times C$ and
the lines $\ell_{b,b_0}$, and Theorem~\ref{thm:main} gives
\[
 \eta|S|\sum_{c\in C}r_\times(c)
 \lesssim(|S||C||B||B_0|)^{2/3}+|S||C|+|B||B_0|+\frac{|S||C||B||B_0|}p.
\]
For $i\ge0$ let $C_i:=\{c:2^i\le r_\times(c)<2^{i+1}\}$. At most
$O(L)$ of these sets are nonempty, since $r_\times\le|T|$. We have
$\sum_cr_\times(c)^3\le8\sum_i2^{2i}\sum_{c\in C_i}r_\times(c)$, and
by H\"older's inequality
$\sum_i2^{2i}|C_i|^{2/3}\lesssim L^{1/3}(\sum_cr_\times(c)^3)^{2/3}$,
while $\sum_i2^{2i}|C_i|\le\sum_cr_\times(c)^2\le|T'||B|^2$ and
$\sum_i2^{2i}\le2\max r_\times^2\le2|T'||B|$. Applying the incidence
bound with $C=C_i$ and summing over $i$,
\[
 \sum_cr_\times(c)^3
 \lesssim\frac{L^{1/3}(|S||B||B_0|)^{2/3}}{\eta|S|}
 \Bigl(\sum_cr_\times(c)^3\Bigr)^{2/3}
 +\frac{|T'||B|^2}{\eta|S|}
 \Bigl(|S|+|B_0|+\frac{|S||B_0||B|}p\Bigr).
\]
If the first term on the right is at least as large as the second,
then the left side is at most a constant times
$L|B|^2|B_0|^2/(\eta^3|S|)$. Otherwise the second term gives the bound.
Dividing by $|T'|^2|B|^2$ in either case,
\begin{equation}\label{eq:four-terms}
 \frac{\sum_cr_\times(c)^3}{|T'|^2|B|^2}
 \lesssim\frac{L|B_0|^2}{\eta^3|S||T'|^2}+\frac1{\eta|T'|}
 +\frac{|B_0|}{\eta|S||T'|}+\frac{|B_0||B|}{\eta p|T'|}.
\end{equation}
Put $\kappa_*:=|T'|/(\kappa|T|^2)$. We show that each of the first
three terms is $O(L^5\kappa_*)$. By \eqref{eq:S-size} and then
\eqref{eq:Tprime-size}, the first term is at most a constant times
\[
 \frac{L^2\delta^3|T|}{\kappa\eta^3|T'|^2}
 =L^2\kappa_*\Bigl(\frac{\delta|T|}{\eta|T'|}\Bigr)^3\lesssim L^5\kappa_*.
\]
By \eqref{eq:Tprime-size} and then \eqref{eq:eta-kappa}, the second
term is
\[
 \frac1{\eta|T'|}=\kappa_*\frac{\kappa|T|^2}{\eta|T'|^2}
 \lesssim\kappa_*\frac{L^2\kappa\eta}{\delta^2}\lesssim L^4\kappa_*.
\]
By \eqref{eq:S-size}, $|B_0|\ge\delta|T|$, and \eqref{eq:Tprime-size},
the third term is at most a constant times
\[
 \frac{L\delta^3|T|}{\eta\kappa|B_0||T'|}
 \le\frac{L\delta^2}{\eta\kappa|T'|}
 =\kappa_*\frac{L\delta^2|T|^2}{\eta|T'|^2}
 \lesssim L^3\kappa_*\eta\le L^3\kappa_*.
\]
To bound the last term, we use the trivial estimate for large $B$
and the hypothesis $|T|^3\le p$ for the remaining sets $B$.
If $|B|\ge\kappa|T|^2=|T'|/\kappa_*$, the trivial bound
$\sum_cr_\times(c)^3\le|T'|^3|B|\le\kappa_*|T'|^2|B|^2$ gives the
required estimate. Otherwise $|B|<\kappa|T|^2$, and by
\eqref{eq:B0-size} the last term is at most
\[
 \frac{2|T|^3}{\eta p|T'|}\le\frac2{\eta|T'|},
\]
which is twice the second term. In all cases
$\sum_cr_\times(c)^3\lesssim L^5\kappa_*|T'|^2|B|^2$, so
$\kappa^\times(T')\lesssim L^5\kappa_*$, which is \eqref{eq:claim}.

\smallskip
\noindent\emph{Iteration.}
Put $X_0:=\emptyset$. While $|X_i|<N/2$, apply the claim to
$T_i:=A\setminus X_i$, which has more than $N/2$ elements and satisfies
$|T_i|^3\le p$, to obtain a nonempty $T'_i\subseteq T_i$, and put
$X_{i+1}:=X_i\cup T'_i$. The sets $X_i$ strictly increase, so the
process stops at the first index $i\ge1$ with $|X_i|\ge N/2$. Let
$X:=X_i$. To keep $Y$ large, take the remainder just before the last
step, $Y:=A\setminus X_{i-1}$, so that $|Y|>N/2$. For
$0\le j<i$ we have $Y\subseteq T_j$, since $X_j\subseteq X_{i-1}$.
It follows that $\kappa^+(Y)|Y|^2\le\kappa^+(T_j)|T_j|^2$.
Hence by \eqref{eq:claim}, with $L=\log(4N)$ bounding each
$\log(4|T_j|)$,
\[
 \kappa^+(Y)|Y|^2\kappa^\times(T'_j)
 \le\kappa^+(T_j)|T_j|^2\kappa^\times(T'_j)\lesssim L^5|T'_j|.
\]
The sets $T'_0,\ldots,T'_{i-1}$ are disjoint with union $X$. The
inequality for disjoint unions gives
$\kappa^\times(X)\le\sum_j\kappa^\times(T'_j)$, and
summing the last display over $j$ yields
$\kappa^+(Y)|Y|^2\kappa^\times(X)\lesssim L^5|X|\le L^5N$. Since
$|Y|>N/2$ this proves the proposition.
\end{proof}

\begin{corollary}\label{cor:sumproduct-two}
For every $\varepsilon>0$ and every finite nonempty $A\subset k$ with
$|A|\le p^{1/3}$,
\[
 \max\{|A+A|,|AA|\}\gtrsim_\varepsilon|A|^{49/38-\varepsilon}.
\]
In characteristic zero there is no restriction on $|A|$.
\end{corollary}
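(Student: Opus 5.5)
The plan is to combine Proposition~\ref{prop:decomposition} with Bloom's bound \eqref{eq:bloom}, applied once in the additive group of $k$ and once in the multiplicative group $k^*$.

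\emph{Reduction.} The assertion is trivial for a singleton. Deleting $0$ from $A$ loses at most half its elements, so we may assume $A\subset k^*$. Put $N:=|A|$. The hypothesis $N\le p^{1/3}$ gives $N^3\le p$, so Proposition~\ref{prop:decomposition} applies. It yields $X,Y\subseteq A$ with $|X|,|Y|\ge N/2$ and
\[
 \kappa^+(Y)\,\kappa^\times(X)\lesssim\frac{(\log 4N)^5}{N}.
\]

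\emph{Applying Bloom's bound.} Fix $\delta>0$ small and put $\alpha:=11/19-\delta$. Applying \eqref{eq:bloom} to $Y$ in the additive group gives
\[
 |A+A|\ge|Y+Y|\gtrsim_\delta N\kappa^+(Y)^{-\alpha}.
\]
Applying it to $X$ in the multiplicative group $k^*$, as noted before Proposition~\ref{prop:decomposition}, gives
\[
 |AA|\ge|XX|\gtrsim_\delta N\kappa^\times(X)^{-\alpha}.
\]
Here we use $|Y|,|X|\ge N/2$ to replace $|Y|$ and $|X|$ by $N$ up to constants.

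\emph{Combining.} Multiplying the two bounds gives
\[
 |A+A|\,|AA|\gtrsim_\delta N^2\bigl(\kappa^+(Y)\kappa^\times(X)\bigr)^{-\alpha}\gtrsim N^{2+\alpha}(\log 4N)^{-5\alpha}.
\]
Hence
\[
 \max\{|A+A|,|AA|\}\gtrsim_\delta N^{1+\alpha/2}(\log4N)^{-5\alpha/2}.
\]
As $\delta\to0$ we have $1+\alpha/2\to1+11/38=49/38$. Choosing $\delta$ small in terms of $\varepsilon$ and absorbing the logarithm into $N^{-\varepsilon/2}$ gives the claim.

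In characteristic zero, $p=\infty$ and the hypothesis $N^3\le p$ of Proposition~\ref{prop:decomposition} holds automatically, so there is no restriction on $|A|$.

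\emph{Main obstacle.} The only step needing care is that \eqref{eq:bloom} holds in $k^*$ with the multiplicative control $\kappa^\times$ defined exactly as in \eqref{eq:control-definition}. This is covered because Bloom's theorem applies in any abelian group. All the substantive work is already contained in Proposition~\ref{prop:decomposition}.
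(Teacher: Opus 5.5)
Your proposal is correct and follows the paper's proof: you reduce to $0\notin A$, apply Proposition~\ref{prop:decomposition} (using $N^3\le p$), and then apply Bloom's bound \eqref{eq:bloom} once additively to $Y$ and once multiplicatively to $X$. The only difference is cosmetic. You multiply the two lower bounds for $|A+A|$ and $|AA|$ and take a geometric mean, while the paper first turns each into $\kappa\gtrsim_\delta\Delta^{-1/\alpha}$ and then multiplies; the two are equivalent.
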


\begin{proof}
As in the proof of Corollary~\ref{cor:sumproduct} we may assume
$0\notin A$, and we put $N:=|A|$ and
$\Delta:=\max\{|A+A|,|AA|\}/N\ge1$. Proposition~\ref{prop:decomposition}
gives $X,Y\subseteq A$ with $|X|,|Y|\ge N/2$ and
$\kappa^+(Y)\kappa^\times(X)\lesssim(\log4N)^5/N$. Fix $0<\delta<11/19$
and put $\alpha:=11/19-\delta$. Bloom's theorem \eqref{eq:bloom} in the
additive and multiplicative groups gives
\[
 |Y+Y|\gtrsim_\delta\kappa^+(Y)^{-\alpha}|Y|,\qquad
 |XX|\gtrsim_\delta\kappa^\times(X)^{-\alpha}|X|.
\]
Since $Y+Y\subseteq A+A$ and $XX\subseteq AA$, both sets on the left
have size at most $\Delta N$. Combining this with $|X|,|Y|\ge N/2$ gives
$\kappa^+(Y),\kappa^\times(X)\gtrsim_\delta\Delta^{-1/\alpha}$.
Multiplying these inequalities and applying the decomposition bound gives
\[
 \Delta^{-2/\alpha}\lesssim_\delta\kappa^+(Y)\kappa^\times(X)
 \lesssim\frac{(\log4N)^5}N,
 \qquad\text{so}\qquad
 \Delta\gtrsim_\delta\Bigl(\frac N{(\log4N)^5}\Bigr)^{\alpha/2}.
\]
As $\delta\to0$ the exponent $\alpha/2$ tends to $11/38$. Choosing
$\delta$ small in terms of $\varepsilon$ and absorbing the logarithm
gives $\Delta\gtrsim_\varepsilon N^{11/38-\varepsilon}$, and
$1+11/38=49/38$.
\end{proof}

Over the reals, Bloom \cite[Theorem~4]{Bloom} obtains the larger
exponent $1270/951\approx1.3354$ by combining control with arguments specific to the
real numbers.

\subsection{Projections and Furstenberg sets}\label{sec:projections}

We thank Paige Bright for pointing out the following consequences of
Theorem~\ref{thm:main}. The two corollaries below give prime-field
analogues of the planar projection and Furstenberg estimates of
Orponen and Shmerkin \cite{OS} and Ren and Wang \cite{RW} in the
Euclidean plane.
Here $p$ is any prime. We use the linear projections
\[
 \pi_\theta(x_1,x_2)=x_1+\theta x_2,\qquad \theta\in\F_p.
\]
A fiber $\pi_\theta^{-1}(t)$ is the line $x_1+\theta x_2=t$.
Thus $|\pi_\theta(X)|$ counts the lines in this parallel family that
meet $X$. The next corollary bounds the number of directions for which
this projection is small.

\begin{corollary}\label{cor:projections}
There is an absolute constant $c>0$ such that, for every nonempty
$X\subseteq\F_p^2$ and $1\le s\le c\min\{p,|X|\}$,
\[
 \bigl|\{\theta\in\F_p:|\pi_\theta(X)|<s\}\bigr|
 \lesssim\max\left\{\frac{s^2}{|X|},1\right\}.
\]
\end{corollary}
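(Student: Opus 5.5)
Let $\Theta$ be the set of $\theta\in\F_p$ with $|\pi_\theta(X)|<s$, write $N:=|X|$ and $t:=|\Theta|$. We may assume $t\ge1$. The plan is to apply Theorem~\ref{thm:main} to the points $X$ and the lines
\[
 \LL:=\{\pi_\theta^{-1}(c):\theta\in\Theta,\ c\in\pi_\theta(X)\}.
\]
These lines are distinct: different $\theta$ give non-parallel lines, and different $c$ give disjoint parallel lines. Each point of $X$ lies on exactly one line of each family $\theta\in\Theta$. Hence
\[
 I(X,\LL)=Nt,\qquad n:=|\LL|<st.
\]

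Theorem~\ref{thm:main} gives
\[
 Nt\lesssim (Nst)^{2/3}+N+st+\frac{Nst}p.
\]
We then check which term on the right dominates.
\begin{itemize}
\item If the first term dominates, then $N^{1/3}t^{1/3}\lesssim s^{2/3}$, so $t\lesssim s^2/N$.
\item If $N$ dominates, then $t\lesssim 1$.
\item The term $st$ is at most $cNt$, since $s\le cN$. For $c$ small compared with the implied constant, it can be absorbed into the left side.
\item Likewise $Nst/p\le cNt$, since $s\le cp$, so this term is absorbed as well.
\end{itemize}
Combining the surviving cases gives $t\lesssim\max\{s^2/N,1\}$, as claimed.

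The only point needing care is fixing $c$. Let $C$ be the constant in Theorem~\ref{thm:main}, or the explicit bound $3(mn)^{2/3}+2m+n+2mn/p$ from its proof. Taking $c=1/8$ makes the absorbed terms at most $Nt/4$ in total (namely $st+2Nst/p\le 3Nt/8$, with slack for constants). The remaining inequality is then $Nt\lesssim (Nst)^{2/3}+N$. I do not expect a genuine obstacle here, since the corollary is a direct incidence count. The hypotheses $s\le cp$ and $s\le cN$ are exactly what allow the $mn/p$ term and the $n$ term to be absorbed.
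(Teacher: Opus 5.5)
Your proposal is correct and follows the paper's proof essentially line for line. You use the same family of fibers over the exceptional directions, the same identity $I(X,\LL)=|X|\,|\Theta|$ with $|\LL|<s|\Theta|$, and absorb the $n$ and $mn/p$ terms using $s\le c\min\{p,|X|\}$. The only blemish is cosmetic: with $c=1/8$ the absorbed terms total $3Nt/8$ rather than $Nt/4$, which still suffices for the absorption.
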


\begin{proof}
Put $M=|X|$, let $\Theta$ be the set of directions in the statement, and write
$T=|\Theta|$. We may assume $T>0$. For each $\theta\in\Theta$, take the fibers of
$\pi_\theta$ that meet $X$. Let $\LL$ be the set of all these lines,
so $|\LL|<sT$. Each point of $X$ lies on exactly one selected
line for each $\theta$. Hence $I(X,\LL)=MT$, and
Theorem~\ref{thm:main} gives
\[
 MT\lesssim(MsT)^{2/3}+M+sT+MsT/p.
\]
Since $s\le cM$ and $s\le cp$, the last two terms are at most $2cMT$,
and if $c$ is sufficiently small they are absorbed into the left side.
Thus either $T\lesssim1$ or $MT\lesssim(MsT)^{2/3}$, which gives
$T\lesssim s^2/M$.
\end{proof}

The remaining projection $\pi_\infty(x_1,x_2)=x_2$ adds at most one
exceptional direction, so the same bound holds for all directions.

We next give a lower bound for $|X|$ when each of $t$ lines contains
at least $s$ points of $X$.

\begin{corollary}\label{cor:furstenberg}
Let $X\subseteq\F_p^2$ and let $\LL$ consist of $t\ge1$ distinct lines.
If every line in $\LL$ contains at least $s$ points of $X$, where
$s\ge2$ is an integer, then
\[
 |X|\gtrsim\min\{s^{3/2}t^{1/2},\ st,\ sp\}.
\]
\end{corollary}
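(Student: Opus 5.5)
Put $M=|X|$ and apply Theorem~\ref{thm:main} to the points $X$ and the $t$ lines of $\LL$. Since each line meets $X$ in at least $s$ points, $I(X,\LL)\ge st$, and the theorem gives
\[
 st\lesssim (Mt)^{2/3}+M+t+\frac{Mt}{p}.
\]
The plan is to decide which term on the right dominates. Each possibility should yield one of the three lower bounds in the minimum, except the term $t$, which must be handled separately.

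We first dispose of the term $t$, since it gives no information about $M$. One option is to restrict each line to exactly $s$ points; this does not change the bound. The cleaner route is to use $s\ge2$. If $t$ were the dominant term, we would have $st\lesssim t$, which is false once $s$ exceeds a large absolute constant $C$. For $2\le s\le C$ we need a separate argument: $t$ distinct lines, each containing at least two points of $X$, force $\binom M2\ge t$ because any two points determine at most one line. Hence $M\gtrsim t^{1/2}\gtrsim s^{3/2}t^{1/2}$ when $s$ is bounded, which is enough in that range. We may therefore assume $s\ge C$. Then the term $t$ is at most $st/2$ (after adjusting constants) and can be absorbed into the left side.

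With the term $t$ absorbed, one of the three remaining terms is $\gtrsim st$:
\begin{itemize}
\item[] If $(Mt)^{2/3}\gtrsim st$, then $M\gtrsim s^{3/2}t^{1/2}$.
\item[] If $M\gtrsim st$, this is the second bound directly.
\item[] If $Mt/p\gtrsim st$, then $M\gtrsim sp$.
\end{itemize}
In every case $M$ is at least a constant times the minimum of the three quantities, which proves the corollary.

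The only delicate step is the term $t$ for small $s$, and the pair-counting argument above handles it. I do not expect any real obstacle: the corollary is essentially a direct rearrangement of Theorem~\ref{thm:main}.
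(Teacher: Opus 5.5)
Your proposal is correct and follows the paper's proof essentially step for step: bound $st\le I(X,\LL)$ by Theorem~\ref{thm:main}, absorb the term $t$ once $s$ exceeds a large absolute constant, read off the three lower bounds from the remaining terms, and treat bounded $s$ by counting pairs of points on distinct lines. The paper uses the slightly stronger count $t\binom s2\le\binom M2$ in that last case, but your $\binom M2\ge t$ is enough because $s$ is bounded there.
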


\begin{proof}
Write $M=|X|$. Since $st\le I(X,\LL)$,
Theorem~\ref{thm:main} implies
\[
 st\le C\bigl((Mt)^{2/3}+M+t+Mt/p\bigr)
\]
for an absolute constant $C$. If $s\ge2C$ then $Ct\le st/2$ can be
absorbed into the left side. At least one
remaining term is comparable to $st$, giving respectively
$M\gtrsim s^{3/2}t^{1/2}$, $M\gtrsim st$, or $M\gtrsim sp$.
If $2\le s<2C$, count pairs of points on the selected lines. Each
selected line contains at least $\binom s2$ pairs of points of $X$, and
a pair of points lies on at most one line, so
\[
 t\binom s2\le\binom M2.
\]
It follows that $M\gtrsim s\sqrt t\gtrsim s^{3/2}\sqrt t$, since $s$
is bounded by an absolute constant. This proves the claim in this case
as well.
\end{proof}

The condition $s\ge2$ is necessary: one point and all the lines through
it give a counterexample when $s=1$. In the notation of Gan \cite{Gan},
Corollary~\ref{cor:furstenberg} proves Conjecture~1.5, the planar
prime-field Furstenberg conjecture. His Theorems~1.8 and~1.11 therefore
give the corresponding sharp exponents in all dimensions, with the
$p^\varepsilon$ losses and the projection thresholds stated there.

\medskip
\noindent\textsc{Enfield, New Hampshire}

\noindent\textit{Email address}: \href{mailto:mlewko@gmail.com}{\texttt{mlewko@gmail.com}}

\end{document}